\def\tr{\mathop{\rm tr}\nolimits}
\def\etr{\mathop{\rm etr}\nolimits}
\def\Vec {\mathop{\rm vec}\nolimits}
\def\diag{\mathop{\rm diag}\nolimits}
\def\Cov{\mathop{\rm Cov}\nolimits}
\def\Var{\mathop{\rm Var}\nolimits}
\def\E{\mathop{\rm E}\nolimits}
\def\P{\mathop{\rm P}\nolimits}
\def\build#1#2#3{\mathrel{\mathop{#1}\limits^{#2}_{#3}}}
\newcommand {\boldgreektext}[1] {\boldmath
             \(#1\)\unboldmath}
\newcommand {\boldgreek}[1]
             {\mbox{\boldgreektext{#1}}
            }
\renewenvironment{abstract}
                 {\vspace{6pt}
                  \begin{center}
                  \begin{minipage}{5in}
                  \centerline{\textbf{Abstract}}
                  \noindent\ignorespaces
                 }
                 {\end{minipage}\end{center}}
\newtheorem{theorem}{\textbf{Theorem}}[section]
\theoremstyle{definition}
\newtheorem{definition}{\textbf{Definition}}[section]
\newtheorem{example}{\textbf{Example}}[section]
\newtheorem{remark}{\textbf{Remark}}[section]
\title{\Large \textbf{A generalisation of the chance-constrained Charnes-Cooper approach}}
\author{
  \textbf{Jos\'e A. D\'{\i}az-Garc\'{\i}a} \thanks{Corresponding author\newline
   {\bf Key words.} Linear programming, stochastic programming, elliptical distribution, Charnes-Cooper approach, chance-constraints programming.\newline
    2000 Mathematical Subject Classification. 90C05; 90C15; 90B36}\\
  {\normalsize Universidad Aut\'onoma de Chihuahua} \\
  {\normalsize Facultad de Zootecnia y Ecolog\'{\i}a} \\
  {\normalsize Perif\'erico Francisco R. Almada Km 1, Zootecnia} \\
  {\normalsize 33820 Chihuahua, Chihuahua, M\'exico}\\
  {\normalsize E-mail: jadiaz@uach.mx}\\
  \textbf{Francisco J. Caro-Lopera}\\
  {\normalsize University of Medellin} \\
  {\normalsize Faculty of Basic Sciences} \\
  {\normalsize Carrera 87 No.30-65} \\
  {\normalsize Medell\'{\i}n, Colombia} \\
  {\normalsize E-mail: fjcaro@udemedellin.edu.co} \\[2ex]
}
\date{}
\begin{document}
\maketitle

\begin{abstract}
A generalisation of the Charnes-Cooper chance-constrained approach is proposed in the setting of the family of elliptically contoured distributions. The new relaxed stochastic linear programming is notably invariant under the entire class of probability distributions.
\end{abstract}                 

\section{Introduction}\label{sec:1}

Consider the following mathematical programming problem
$$
\begin{array}{c}
  \build{\min}{}{\mathbf{x}} z(\mathbf{x},\boldgreek{\xi}_{0}) \\[2Ex]
  \mbox{subject to:} 
\end{array}
$$
$$
  \begin{array}{c}
    g_{1} (\mathbf{x},\boldgreek{\xi}_{1}) \leq 0 \\
    g_{2} (\mathbf{x},\boldgreek{\xi}_{2}) \leq 0  \\
    \vdots \\
    g_{m} (\mathbf{x},\boldgreek{\xi}_{m}) \leq 0  \\[2Ex]
    x_{j} \geq 0, \ j =1,2,\dots,n, 
  \end{array}
$$
where $z(\cdot)$ is the objective function (commonly termed the cost function in other contexts); $g_i(\cdot)$, $i =\ 1,2,\dots,m$ denote the constraints; $\mathbf{x}=(x_{1} \ \cdots \ x_{n})' \in\Re^{n}$ are the decision variables; and $\boldgreek{\xi}_{i} \in \Re^{k}$, $i=0,1,\dots,m$ are the shape parameters. If $\mathbf{x}$ and/or $\boldgreek{\xi}_{i}$ are random at least for some $i$, then this problem is defined as a \textbf{stochastic programming problem}.

In particular, we are interested in the stochastic linear programming problem, that is, when $z(\cdot)$ and $g_{i}()$ are linear functions. Essentially, there are two approaches for solving a stochastic programming problem. In the first approach, deterministic techniques are extended to the stochastic case; examples of these procedures are: the Ruszczynski feasible directions technique, the subgradient projection method, the extended Lagrange functions, and the min-max procedure, to mention a few, \citet{p:95}, \citet{rc:05}. \citet{bl:11} and \citet{sdr:14}. Another approach consists of methods based on statistical and probabilistic ideas that address the stochastic problem as an equivalent deterministic problem. Specifically, \textbf{a problem is said to be equivalent in the sense that the solution to the new deterministic (linear or nonlinear) programming problem is a solution to the original stochastic programming problem}. Some of the methods developed under this approach are the $E$-Model, $V$-Model, $P$-Model and chance-constrained programming technique, see \citet{chc:59}, \citet{chc:63}, \citet{p:95} and \citet{rc:05}, among many other authors.

In the last approach, the stochastic linear programming problem considers normal distributed random parameters. Moreover, they demand that the parameters of the addressed normal distributions must be known, see \citet{chc:63}. In this context, a more realistic scenario for the stochastic linear programming problem should consider appropriate random samples for parameter estimation. Furthermore, considering that all the parameters of the stochastic linear programming problem follow a normal distribution is very restrictive.

This paper relaxes such restrictions by a generalisation of the Charnes-Cooper chance-constrained method. The new distribution assumptions involves the following gains: each parameter of the stochastic linear programming problem is now considered random; first, it is assumed that there is a random sample, not necessarily of the same size; second, the normality assumption is refused   with a natural flexibly law assumption for each parameter, which can follow an arbitrary elliptically contoured distribution. The new approach provides an equivalent deterministic (linear or nonlinear) programming problem with a remarkable \textbf{\emph{invariant solution}} under any choice of the elliptical distribution assumed for the parameters in the stochastic linear programming problem.

Section \ref{sec:2} summarises some preliminary results necessary for the development of the article. Specifically, the matrix notation and the definition and properties of the elliptically contoured matrix distributions and associated results are established. Then, Section \ref{sec:3} provides a complete exposition of the new approach and the invariant solution of four particular cases that can arise in a stochastic linear programming problem. Each case is also illustrated in the context of a practical situation. 

\section{Preliminaries results}\label{sec:2}

This section recalls some of the well-known notation and definitions of matrix algebra that are needed for the research. In addition, the definition and properties of the vector-elliptically contoured matrix distribution are briefly described; see \citet{fz:90}, \citet{fkn:90}, and \citet{gvb:13} for more details. Finally, the problem of typical interest in stochastic programming is also addressed; some extra information can be seen in  \citet{sm:84}, \citet{up:01} and \citet{sdr:14}.  

\subsection{Notation}

The \textbf{\emph{matrices}} and in particular the \emph{vectors} shall be denoted by bold capital and lowercase letters, respectively. If $\mathbf{A}=(a_{ij})$ is a matrix with $m$ rows and $n$ columns, it is denoted as $\mathbf{A} \in \Re^{m \times n}$. If $m=n$, then $\mathbf{A}$ is termed \textbf{\emph{square matrix}}. If all $a_{ij}=0$, $\mathbf{A}$ is termed \textbf{\emph{zero matrix}}, written as $\mathbf{A}=\mathbf{0}$. If $\mathbf{A}$ is an square matrix such that $a_{ii} = 1$ for all $i = 1,\dots,m$ and $a_{ij} =0$ for all $i\neq j$ then $\mathbf{A}$ is termed \textbf{\emph{identity matrix}}, written $\mathbf{A} = \mathbf{I}_{m}$ or $\mathbf{A} = \mathbf{I}$. The \textbf{\emph{transpose}} of $\mathbf{A} \in \Re^{m \times n}$ is $\mathbf{A}^{'} \in \Re^{n \times m}$, obtained by interchanging the rows and columns of $\mathbf{A}$. Denotes $\mathbf{1} = (1\ 1\ \cdots\ 1)'$. If $\mathbf{A} \in \Re^{m \times m}$ is such that $\mathbf{A} = \mathbf{A}'$ it is said that $\mathbf{A}$ is \textbf{\emph{symmetric matrix}}. A square matrix $\mathbf{A} \in \Re^{m \times m}$ is said to be \textbf{\emph{diagonal matrix}} if all off-diagonal elements are zero, written as $\mathbf{D} \equiv \diag(a_{11}, \dots,a_{mm}) \equiv \diag(a_{ii})$. The \textbf{\emph{determinant}} of a square matrix $\mathbf{A}$, is denoted by $|\mathbf{A}|$. If $|\mathbf{A}| \neq 0$, then $\mathbf{A}$ is termed \textbf{\emph{nonsingular}}, moreover, it exists $\textbf{A}^{-1}$ termed \textbf{\emph{inverse}} of $\mathbf{A}$. $\mathbf{A} \in \Re^{m \times n}$ is partitioned into submatrices if $\mathbf{A}$ can be written as 
$$
  \left(
    \begin{array}{cccc}
      \mathbf{A}_{11} & \mathbf{A}_{12} & \cdots & \mathbf{A}_{1r}\\
      \mathbf{A}_{21} & \mathbf{A}_{21} & \cdots & \mathbf{A}_{1r} \\
      \vdots & \vdots & \ddots & \vdots \\
      \mathbf{A}_{k1} & \mathbf{A}_{k1} & \cdots & \mathbf{A}_{kr} \\
    \end{array}
  \right),
$$
where $\mathbf{A}_{ij} \Re^{m_{i} \times n_{j}}$, such that $i = 1,2, \dots,k$. $j = 1,2,\dots,r$ with $m_{1} + m_{2} + \cdots + m_{k} = m$, and $n_{1} + n_{2} + \cdots + n_{k} = n$.

The \textbf{\emph{trace}} of $\mathbf{A}$ is denoted as $\tr(\mathbf{A})$. It shall be used $e^{x} =\exp(x)$ and $\exp(\tr(\mathbf{A})) = \etr(\mathbf{A})$. If $\mathbf{A}$ is a symmetric matrix, $\mathbf{A} > \mathbf{0}$ denotes a \textbf{\emph{positive definite}} matrix. If $\mathbf{A} = (\mathbf{a}_{1} \ \mathbf{a}_{2}\ \cdots \ \mathbf{a}_{n})$, $\mathbf{a}_{j} \in \Re^{m}$, $j = 1, 2, \dots, n$, $\Vec(\mathbf{A}) \in \Re^{mn}$ denotes the \textbf{\emph{vector}} formed by stacking the columns of $\mathbf{A}$ under each other; that is
$$
  \Vec(\mathbf{A}) = \left(
                       \begin{array}{c}
                         \mathbf{a}_{1} \\
                         \mathbf{a}_{2} \\
                         \vdots \\
                         \mathbf{a}_{n} \\
                       \end{array}
                     \right).
$$ 
The \textbf{\emph{Kronecker product}} of $\mathbf{A} \in \Re^{m \times n}$ and $\mathbf{B} \in \Re^{p \times q}$, is defined and denoted by $\mathbf{A} \otimes \mathbf{B}=(a_{ij}\mathbf{B}) \in \Re^{mp \times nq}$. Finally, $||\mathbf{x}||$ denotes the \textbf{\emph{Frobenious norm}} of $\mathbf{x} \in \Re^{m}$.

\subsection{Vector-elliptically contoured matrix distribution}

The class of elliptically contoured distributions has received increasing attention in statistical literature. See, for example, \citet{fz:90}, \citet{fkn:90}, and \citet{gvb:13}. We shall consider here the nomenclature of \citet{fz:90}.

\begin{definition}\label{def:1}
It is said that a random matrix $\mathbf{X} \in \Re^{p \times q}$ has a \textit{\textbf{vector-elliptically contoured matrix distribution}}, with a \textbf{\emph{location matrix}} $\mathbf{M} \in \Re^{p \times q}$ and a \textbf{\emph{scale matrix}} $\mathbf{C} \otimes \mathbf{D} \in \Re^{pq \times pq}$, $\mathbf{C} \geq {\mathbf{0}}$ and $\mathbf{D} \geq {\mathbf {0}}$, with $\mathbf{C} \in \Re^{p \times p}$ matrix and $\mathbf{D} \in \Re^{q \times q}$, its \textbf{\emph{characteristic function}} is 
$$
  \psi_{\mathbf{X}}(\mathbf{T}) = \etr(i \mathbf{T}^{'}\mathbf{M})\phi\left[\tr(\mathbf{D}\mathbf{T}^{'}\mathbf{C T})\right],
$$
for some function $\phi: [0,\infty)\rightarrow \Re$, $\mathbf{T} \in \Re^{p \times q}$. This distribution shall be denoted as $\mathbf{X} \sim \mathcal{E}_{p \times q}(\mathbf{M}, \mathbf{C} \otimes \mathbf{D}; \phi)$. 
\end{definition}

In addition, if the distribution has a \textbf{\emph{density function}} with respect to Lebesgue measure, this is given by
\begin{equation}\label{eq1}
  dF_{\mathbf{X}}(\mathbf{X}) = |\mathbf{C}|^{-q/2} |\mathbf{D}|^{-p/2} \ g \left\{\tr\left[\mathbf{D}^{-1} (\mathbf{X}-\mathbf{M})^{'} \mathbf{C}^{-1}(\mathbf{X}-\mathbf{M})\right]\right\}(d\mathbf{X}),
\end{equation}
where $\mathbf{C} > {\mathbf{0}}$ and $\mathbf{D} > {\mathbf {0}}$ and $(d\mathbf{X})$ denotes the Lebesgue measure on $\Re^{p \times q}$. The function $g: \Re \rightarrow [0, \infty)$ is termed the \textbf{density generator}, such that $\int^{\infty}_{0} v^{pq/2-1} g(v) \ dv < \infty$; where $g$ and $\phi$ determine each other for specified $p$ and $q$. This fact shall be denoted as $\mathbf{X} \sim \mathcal{E}_{p \times q}(\mathbf{M}, \mathbf{C} \otimes \mathbf{D}; g)$.  In addition, from \citet[p. 59]{fz:90} we have that 
\begin{equation}\label{eq:g}
  1 = \int_{\mathbf{X}}dF(\tr(\mathbf{X}'\mathbf{X}))(d\mathbf{X}) = \frac{\pi^{pq/2}}{\Gamma[pq/2]}\int^{\infty}_{0} v^{pq/2-1} g(v) \ (dv) 
\end{equation}

This class of distributions includes the matrix variate Normal, $T-$, Contaminated Normal, Logist Pearson type II and VII and Power Exponential, among other distributions; whose shapes have tails that are more or less weighted and/or present a greater or smaller degree of kurtosis than the matrix variate normal distribution.

Now, the matrix variate normal case of \citet[Theorem 3.1.1, p.79]{mh:05} in terms of the elliptical setting of \citet[Theorem 2.1, p.16]{gvb:13} provides that $\Vec(\mathbf{X}) \sim \mathcal{E}_{pq}(\Vec(\mathbf{M}), \mathbf{C} \otimes \mathbf{D}; \phi)$. Then, we can propose an alternative definition of the vector-elliptically contoured matrix distribution. Specifically, this idea is summarised as $\Vec(\mathbf{X}) \sim \mathcal{E}_{pq}(\boldgreek{\delta}, \boldgreek{\Xi};\phi)$, where $\boldgreek{\delta} = \Vec(\mathbf{M}) \in \Re^{pq}$ and $\boldgreek{\Xi} \in \Re^{pq \times pq}$ are arbitrary. Observe that if $\mathbf{X}$ has \textbf{\emph{moments}}, hence  $\E(\Vec(\mathbf{X})) = \boldgreek{\delta} \in \Re^{pq}$ and  $\Cov(\Vec(\mathbf{X}')) =-2 \phi'(0)\boldgreek{\Xi} \in \Re^{pq \times pq}$, where 
$$
 \phi'(0) = \left.\frac{\partial \phi(\Vec(\mathbf{T}))}{\partial \Vec(\mathbf{T})}\right|_{\mathbf{T} =0}.
$$ 
This generalisation in the parameter definition is termed parameter enrichment and is frequently used in Bayesian analysis, see \citet[Section 8.6,2, pp. 252-254]{p:82}. Thus, $\Vec(\mathbf{X})$ has a density function with respect to Lebesgue measure $(d\Vec(\mathbf{X}))$ on $\Re^{pq}$ and it is given by 
\begin{equation}\label{eq2}
  dF_{\Vec(\mathbf{X})}(\Vec(\mathbf{X})) = |\boldgreek{\Xi}|^{-1/2} \ g \left\{\left[\Vec(\mathbf{X})-\boldgreek{\delta}\right]^{'} \boldgreek{\Xi}^{-1} \left[\Vec(\mathbf{X})-\boldgreek{\delta}\right]\right\} (d\Vec(\mathbf{X})),
\end{equation}
where $\Vec^{'}(\mathbf{X}) = (\Vec(\mathbf{X}))'$.

Then, by \citet[eq. 3.3.10, p. 103]{fz:90} and \citet[Theorem 2.2, p. 16]{gvb:13}, we have the following result:
\begin{theorem}\label{teo:1}
  Suppose that $\mathbf{E} \in \Re^{r \times p}$, $\mathbf{F} \in \Re^{q \times s}$ and $\mathbf{C} \in \Re^{r \times s}$ are constant matrices. Let $\mathbf{X} \sim \mathcal{E}_{r \times s}(\mathbf{M}, \mathbf{C} \otimes \mathbf{D}; \phi)$, then
  $$
    \mathbf{EXF} + \mathbf{C} \sim \mathcal{E}_{r \times s}(\mathbf{EM}\mathbf{F} + \mathbf{C}, (\mathbf{E \Sigma E'}) \otimes (\mathbf{F'\Theta F}); \phi).
  $$  
\end{theorem}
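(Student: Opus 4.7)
The plan is to identify the distribution of $\mathbf{Y} := \mathbf{E}\mathbf{X}\mathbf{F}+\mathbf{C}$ through its characteristic function and then invoke uniqueness, since Definition \ref{def:1} specifies the family entirely at the level of characteristic functions. Writing the scale of $\mathbf{X}$ as $\boldgreek{\Sigma}\otimes\boldgreek{\Theta}$ to match the notation appearing in the conclusion, I would pick an arbitrary $\mathbf{T}\in\Re^{r\times s}$ and start from
$$
\psi_{\mathbf{Y}}(\mathbf{T}) = \E\!\left[\etr(i\mathbf{T}'\mathbf{Y})\right] = \etr(i\mathbf{T}'\mathbf{C})\,\E\!\left[\etr(i\mathbf{T}'\mathbf{E}\mathbf{X}\mathbf{F})\right],
$$
thereby extracting the deterministic shift $\mathbf{C}$. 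This factor already supplies the $\mathbf{C}$ piece of the new location.

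The next step is to rewrite the remaining expectation as $\psi_{\mathbf{X}}$ evaluated at a transformed argument. By the cyclic property of the trace, $\tr(\mathbf{T}'\mathbf{E}\mathbf{X}\mathbf{F}) = \tr\bigl((\mathbf{E}'\mathbf{T}\mathbf{F}')'\mathbf{X}\bigr)$, so setting $\mathbf{S} := \mathbf{E}'\mathbf{T}\mathbf{F}' \in \Re^{p\times q}$ and invoking Definition \ref{def:1} gives
$$
\E\!\left[\etr(i\mathbf{S}'\mathbf{X})\right] = \etr(i\mathbf{S}'\mathbf{M})\,\phi\!\left[\tr(\boldgreek{\Theta}\mathbf{S}'\boldgreek{\Sigma}\mathbf{S})\right].
$$
A further round of cyclic permutations yields $\tr(\mathbf{S}'\mathbf{M}) = \tr(\mathbf{T}'\mathbf{E}\mathbf{M}\mathbf{F})$ and, expanding $\mathbf{S}' = \mathbf{F}\mathbf{T}'\mathbf{E}$,
$$
\tr(\boldgreek{\Theta}\mathbf{S}'\boldgreek{\Sigma}\mathbf{S}) = \tr\!\bigl((\mathbf{F}'\boldgreek{\Theta}\mathbf{F})\,\mathbf{T}'(\mathbf{E}\boldgreek{\Sigma}\mathbf{E}')\,\mathbf{T}\bigr).
$$

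Assembling the pieces gives
$$
\psi_{\mathbf{Y}}(\mathbf{T}) = \etr\!\bigl(i\mathbf{T}'(\mathbf{E}\mathbf{M}\mathbf{F}+\mathbf{C})\bigr)\,\phi\!\left[\tr\bigl((\mathbf{F}'\boldgreek{\Theta}\mathbf{F})\,\mathbf{T}'(\mathbf{E}\boldgreek{\Sigma}\mathbf{E}')\,\mathbf{T}\bigr)\right],
$$
which is exactly the characteristic function of an $\mathcal{E}_{r\times s}\!\bigl(\mathbf{E}\mathbf{M}\mathbf{F}+\mathbf{C},\,(\mathbf{E}\boldgreek{\Sigma}\mathbf{E}')\otimes(\mathbf{F}'\boldgreek{\Theta}\mathbf{F});\phi\bigr)$ law in the sense of Definition \ref{def:1}. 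Uniqueness of the characteristic function then concludes the argument.

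The computations are entirely mechanical, so I anticipate no deep obstacle; the only real point of care is arranging the trace manipulations so that the Kronecker pattern $(\mathbf{E}\boldgreek{\Sigma}\mathbf{E}')\otimes(\mathbf{F}'\boldgreek{\Theta}\mathbf{F})$ emerges with the pre- and post-transposes in exactly the positions prescribed by Definition \ref{def:1}. A useful sanity check is that the density formula \eqref{eq1} is never invoked, so the result holds without any absolute-continuity hypothesis on $\mathbf{X}$.
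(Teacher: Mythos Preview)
Your characteristic-function computation is correct and is the standard route to this closure-under-affine-maps result for matrix elliptical laws. The paper, however, does not supply a proof at all: it simply imports the statement from \citet[eq.\ 3.3.10, p.\ 103]{fz:90} and \citet[Theorem 2.2, p.\ 16]{gvb:13}. What you have written is essentially the argument one finds in those references, so there is no substantive divergence in method---you have just made explicit what the paper treats as a quotation. Your observation that the density \eqref{eq1} is never needed, and your renaming of the scale factors to $\boldgreek{\Sigma}\otimes\boldgreek{\Theta}$ to sidestep the paper's overloaded use of $\mathbf{C}$ (and the mismatched dimension subscript on $\mathbf{X}$), are both sensible.
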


Moreover, by \citet[Corollary 1, p. 135]{fz:90} we have that:
   
\begin{theorem}\label{teo2}
  Let $\mathbf{x}_{j} \in \Re^{q}$, $j = 1,2,\dots,p$, be a sample of random vectors such that
  $$
      \mathbf{X} = \left(
           \begin{array}{c}
             \mathbf{x}'_{1} \\
             \mathbf{x}'_{2} \\
             \vdots \\
             \mathbf{x}'_{p} \\
           \end{array}
         \right) \sim \mathcal{E}_{p \times q}(\mathbf{1}\boldgreek{\mu}', \mathbf{I}_{p} \otimes \mathbf{\Sigma}; g).
  $$   
 Then, the joint density of  $\overline{\mathbf{x}}$ and $\mathbf{S}$, $dF_{\overline{\mathbf{x}}, \mathbf{S}}(\overline{\mathbf{x}}, \mathbf{S})$ is
  $$
     = \frac{p^{q/2}\pi^{(p-1)q/2}}{\Gamma_{q}[(p-1)/2]} \frac{|\mathbf{S}|^{(p-q)/2-1}}{|\mathbf{\Sigma}|^{p/2}} g\left[\tr \mathbf{\Sigma}^{-1}\mathbf{S} + p(\overline{\mathbf{x}}-\boldgreek{\mu})'\mathbf{\Sigma}^{-1}(\overline{\mathbf{x}}-\boldgreek{\mu})\right] (d\overline{\mathbf{x}})\wedge (d\mathbf{S}),
  $$
  where $p > q$, $\overline{\mathbf{x}} \in \Re^{q}$, $\mathbf{S} \in \Re^{q \times q}$, $\mathbf{S} > \mathbf{0}$ and $(d\overline{\mathbf{x}})$ and $(d\mathbf{S})$ are the Lebesgue measures on $\Re^{q}$ and the cone of positive definite matrices on $\Re^{q \times q}$, respectively. Meanwhile,
  $$
    \overline{\mathbf{x}} = \displaystyle\frac{1}{p} \sum_{k =1}^{p}  \mathbf{x}_{k},
  $$
  and
  $$
    \mathbf{S} = \displaystyle\sum_{k =1}^{p} (\mathbf{x}_{k}-\overline{\mathbf{x}})(\mathbf{x}_{k}-\overline{\mathbf{x}})' = \mathbf{X}'\left(\mathbf{I}_{p} - \frac{1}{p} \mathbf{1}_{p} \mathbf{1}'_{p}\right)\mathbf{X}.
  $$
\end{theorem}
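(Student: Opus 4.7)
The plan is to start from the joint density (\ref{eq1}) with $\mathbf{C} = \mathbf{I}_p$, $\mathbf{D} = \boldgreek{\Sigma}$, $\mathbf{M} = \mathbf{1}\boldgreek{\mu}'$, giving
$$
  dF_{\mathbf{X}}(\mathbf{X}) = |\boldgreek{\Sigma}|^{-p/2}\, g\!\left\{\tr\!\left[\boldgreek{\Sigma}^{-1}(\mathbf{X}-\mathbf{1}\boldgreek{\mu}')'(\mathbf{X}-\mathbf{1}\boldgreek{\mu}')\right]\right\}(d\mathbf{X}),
$$
and then to transform $(d\mathbf{X})$ into $(d\overline{\mathbf{x}})\wedge(d\mathbf{S})$ while marginalising out all ancillary directions. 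The crucial feature is that the generator $g$ depends on $\mathbf{X}$ only through a scalar functional of $(\overline{\mathbf{x}},\mathbf{S})$, so the integration over the ancillary variables produces a pure constant.

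The first step is the standard partition identity
$$
  (\mathbf{X}-\mathbf{1}\boldgreek{\mu}')'(\mathbf{X}-\mathbf{1}\boldgreek{\mu}')
  = \mathbf{S} + p(\overline{\mathbf{x}}-\boldgreek{\mu})(\overline{\mathbf{x}}-\boldgreek{\mu})',
$$
which follows by adding and subtracting $\mathbf{1}\overline{\mathbf{x}}'$ and using $\mathbf{1}'(\mathbf{X}-\mathbf{1}\overline{\mathbf{x}}')=\mathbf{0}'$. Taking the trace against $\boldgreek{\Sigma}^{-1}$ rewrites the argument of $g$ exactly as the one appearing in the statement. Next I would introduce a $p\times p$ Helmert orthogonal matrix $\mathbf{H}$ whose first row is $\mathbf{1}'/\sqrt{p}$; the substitution $\mathbf{X}\mapsto \mathbf{H}\mathbf{X}$ has unit Jacobian, its first row is $\sqrt{p}\,\overline{\mathbf{x}}'$, and the remaining $(p-1)\times q$ block $\mathbf{Y}$ satisfies $\mathbf{Y}'\mathbf{Y}=\mathbf{S}$. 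The change $\sqrt{p}\,\overline{\mathbf{x}}\mapsto \overline{\mathbf{x}}$ contributes the factor $p^{q/2}$.

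The remaining task is to convert $(d\mathbf{Y})$ into $(d\mathbf{S})$. For this I would invoke the polar decomposition $\mathbf{Y}=\mathbf{H}_{1}\mathbf{T}$ with $\mathbf{H}_{1}$ on the Stiefel manifold $\mathcal{V}_{q,p-1}$ and $\mathbf{T}$ an upper triangular square root of $\mathbf{S}$, for which the classical Jacobian is
$$
  (d\mathbf{Y})=2^{-q}|\mathbf{S}|^{(p-q)/2-1}(d\mathbf{S})\wedge(\mathbf{H}_{1}'d\mathbf{H}_{1}),
$$
followed by the well-known volume
$$
  \int_{\mathcal{V}_{q,p-1}}(\mathbf{H}_{1}'d\mathbf{H}_{1})=\frac{2^{q}\pi^{(p-1)q/2}}{\Gamma_{q}[(p-1)/2]}.
$$
Since $g$ does not depend on $\mathbf{H}_{1}$, this integral factorises cleanly, the $2^{\pm q}$ cancel, and multiplying by the earlier $p^{q/2}$ yields the quoted constant. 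The condition $p>q$ enters precisely to guarantee that $\mathbf{S}>\mathbf{0}$ almost surely and that the Stiefel-manifold formula is valid.

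The main obstacle is not the algebra of the identity or the trace manipulation, which are routine, but rather the careful bookkeeping of the Jacobian in the polar decomposition of $\mathbf{Y}$ and the orientation factors on the Stiefel manifold; these are classical in the Wishart derivation under normality, and here one must check that nothing in the argument actually requires normality. The key observation making this work in the elliptical setting is that the decomposition of the exponent is a purely algebraic identity, so the density generator $g$ is evaluated at a function of $(\overline{\mathbf{x}},\mathbf{S})$ alone, allowing the Stiefel integration to be carried out exactly as in the Gaussian case cited from \citet[Corollary~1, p.~135]{fz:90}.
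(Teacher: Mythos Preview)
Your proof sketch is correct and follows the classical derivation (Helmert rotation to split off $\overline{\mathbf{x}}$, followed by the polar/Stiefel change of variables on the residual block), which is precisely the route taken in \citet[Corollary~1, p.~135]{fz:90}. Note, however, that the paper itself does not supply a proof of this theorem: it is stated as a direct citation of that corollary, with no argument given. So there is nothing in the paper to compare your argument against beyond the reference you already invoke in your final sentence; your outline simply reconstructs what lies behind that citation, and does so accurately.
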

Finally, a transcendental result for this work is set as follows:
\begin{theorem}\label{teo3}
  In Theorem \ref{teo2} take $q=1$ and define the random variable 
  $$
    T = \frac{\displaystyle\frac{(\overline{x}-\mu)}{\sqrt{\displaystyle\frac{\sigma^{2}}{p}}}}{\sqrt{\displaystyle\frac{s}{(p-1)\sigma^{2}}}} = \frac{\sqrt{p}\sqrt{p-1}(\overline{x}-\mu)}{\sqrt{s}}.
  $$
  Then, $T$ follows a t-student distribution with $p-1$ degree of freedom; a fact denoted as $T \sim \mbox{t}_{p-1}$.  Thus, the random variable $T$ has the same distribution for the entire family of elliptically contoured distributions.
\end{theorem}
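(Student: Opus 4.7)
The plan is to start from the joint density given in Theorem \ref{teo2} specialised to $q=1$, perform a change of variables to $(T,s)$, and then integrate out $s$. The crucial point is that the generator $g$ will appear only in an integral of the form $\int_{0}^{\infty} u^{p/2-1}g(u)\,du$, which by the normalisation identity \eqref{eq:g} is a constant depending only on $p$, not on the particular elliptical law. This is what produces the invariance.

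In more detail: first, I would set $q=1$ in Theorem \ref{teo2}, so that $\mathbf{S}$ becomes a positive scalar $s$, $|\mathbf{\Sigma}|^{p/2}=\sigma^{p}$, $\tr(\mathbf{\Sigma}^{-1}\mathbf{S})=s/\sigma^{2}$, and $\Gamma_{q}[(p-1)/2]=\Gamma[(p-1)/2]$, leading to the joint density
$$
 f_{\overline{x},s}(\overline{x},s)=\frac{\sqrt{p}\,\pi^{(p-1)/2}}{\Gamma[(p-1)/2]}\,\frac{s^{(p-3)/2}}{\sigma^{p}}\,g\!\left[\frac{s+p(\overline{x}-\mu)^{2}}{\sigma^{2}}\right].
$$
Next I would make the change of variables $(\overline{x},s)\mapsto(T,s)$ with $T$ as defined in the statement. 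The inverse is $\overline{x}-\mu=T\sqrt{s}/\sqrt{p(p-1)}$ with Jacobian $\sqrt{s}/\sqrt{p(p-1)}$, and the argument of $g$ becomes $(s/\sigma^{2})\bigl(1+T^{2}/(p-1)\bigr)$, so the joint density of $(T,s)$ reduces to
$$
 f_{T,s}(t,s)=\frac{\pi^{(p-1)/2}}{\Gamma[(p-1)/2]\sqrt{p-1}}\,\frac{s^{(p-2)/2}}{\sigma^{p}}\,g\!\left[\frac{s}{\sigma^{2}}\!\left(1+\frac{t^{2}}{p-1}\right)\right].
$$

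Then I would integrate out $s$ by substituting $u=(s/\sigma^{2})\bigl(1+t^{2}/(p-1)\bigr)$. The factors of $\sigma$ cancel and the factor $\bigl(1+t^{2}/(p-1)\bigr)$ is pulled out of the integral to the power $-p/2$, leaving
$$
 f_{T}(t)=\frac{\pi^{(p-1)/2}}{\Gamma[(p-1)/2]\sqrt{p-1}}\,\left(1+\frac{t^{2}}{p-1}\right)^{-p/2}\int_{0}^{\infty} u^{p/2-1}g(u)\,du.
$$
The main (and only non-routine) step is recognising that the remaining integral is exactly the normalisation \eqref{eq:g} applied with $pq=p$ (the sample $\mathbf{X}$ is $p\times 1$), hence equals $\Gamma(p/2)/\pi^{p/2}$, independently of $g$. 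Substituting this value collapses $f_{T}$ to
$$
 f_{T}(t)=\frac{\Gamma(p/2)}{\Gamma[(p-1)/2]\sqrt{(p-1)\pi}}\left(1+\frac{t^{2}}{p-1}\right)^{-p/2},
$$
which is the standard $\mathrm{t}_{p-1}$ density. Since no property of $g$ beyond its normalisation was used, $T\sim\mathrm{t}_{p-1}$ holds uniformly over the elliptical family, establishing the claimed invariance.
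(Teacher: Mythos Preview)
Your proof is correct and follows essentially the same route as the paper: specialise Theorem~\ref{teo2} to $q=1$, change variables from $(\overline{x},s)$ to $(T,s)$, substitute $u=(s/\sigma^{2})(1+t^{2}/(p-1))$ to isolate $g$, and evaluate $\int_{0}^{\infty}u^{p/2-1}g(u)\,du$ via the normalisation \eqref{eq:g}. The paper's version merely relabels $s$ as $r$ and splits the last substitution into two displayed steps, but the argument is the same.
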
 
\begin{proof}
  The demonstration of this invariant result can be obtained as a consequence of \citet[Theorem 5.1.1, p.154]{fz:90} and \citet[Theorem 5.12, p. 139]{gvb:13}. Alternatively, taking $q =1$ in Theorem \ref{sec:2} and denoting $\Gamma_{1}[\cdot] \rightarrow \Gamma[\cdot]$, $\mathbf{\Sigma \rightarrow \sigma^{2}}$, $\mathbf{S} \rightarrow s$, $\boldgreek{\mu} \rightarrow \mu$ and $\overline{\mathbf{x}} \rightarrow \overline{x}$, then the join density of $\overline{X}$ and $S$ is 
  $$
     dF_{(\overline{X},S)}(\overline{x},s)= \frac{p^{1/2}\pi^{(p-1)/2} s^{(p-1)/2-1}}{\Gamma[(p-1)/2](\sigma^{2})^{p/2}}  g\left[\frac{1}{\sigma^{2}}\left(s + p(\overline{x}-\mu)^{2}\right)\right] (d\overline{x})\wedge (ds).
  $$
  Making the change of variable
  $$
    t = \frac{\sqrt{p}\sqrt{p-1}(\overline{x}-\mu)}{\sqrt{s}} \quad \mbox{and}\quad s=r,
  $$
  hence  
  $$
    \overline{x} = \frac{\sqrt{r} t}{\sqrt{p}\sqrt{p-1}} +\mu \quad \mbox{and}\quad r = s,
  $$
  noting that
  $$
    |J[(\overline{x},s)\rightarrow (t,r)]| = \left |
            \begin{array}{cc}
              \displaystyle\frac{\partial \overline{x}}{\partial t} & \displaystyle\frac{\partial \overline{x}}{\partial r} \\[2ex]
              \displaystyle\frac{\partial s}{\partial t} & \displaystyle\frac{\partial s}{\partial r} 
            \end{array}
            \right | =
            \left|
              \begin{array}{cc}
                \displaystyle\frac{\sqrt{r}}{\sqrt{p}\sqrt{p-1}} & \displaystyle\frac{t}{2\sqrt{p}\sqrt{p-1}\sqrt{r}} \\[2ex]
                0 & 1 \\
              \end{array}
            \right|. 
  $$
  Therefore
  $$
    (d\overline{x})\wedge (ds) = \frac{\sqrt{r}}{\sqrt{p}\sqrt{p-1}} (dt)\wedge (dr).
  $$
  Thus,
  $$
     dF_{(T,R)}(t,r)= \frac{\pi^{(p-1)/2} r^{(p-1)/2-1}}{\Gamma[(p-1)/2](\sigma^{2})^{p/2}\sqrt{p-1}}  g\left[\frac{r}{\sigma^{2}}\left(1 + \frac{t^{2}}{(p-1)}\right)\right] (dt)\wedge(dr).
  $$
  Now, making the change of variable 
  $$
    w= \frac{r}{\sigma^{2}}\left(1 + \frac{t^{2}}{(p-1)}\right),
  $$
  we have that 
  $$
     (dr) = \sigma^{2}\left(1 + \frac{t^{2}}{(p-1)}\right)^{-1}(dw).
  $$ 
  Thus, the joint density of $t$ and $w$ is
  $$
     dF_{(T,W)}(t,w)= \frac{\pi^{(p-1)/2} }{\Gamma[(p-1)/2]\sqrt{p-1}} \left(1 + \frac{t^{2}}{(p-1)}\right)^{-p/2}  w^{p/2-1}g(w) (dw)\wedge(dt).
  $$ 
  Applying equation (\ref{eq:g})
  $$
    \int_{0}^{\infty}  w^{p/2-1}g(w) (dw) = \frac{\Gamma[p/2]}{\pi^{p/2}},
  $$
  we finally obtain the required marginal density of $T$:
  $$
    dF_{T}(t)= \frac{\Gamma[p/2]}{\Gamma[(p-1)/2]\sqrt{\pi(p-1)}} \left(1 + \frac{t^{2}}{(p-1)}\right)^{-p/2} (dt),
  $$
  which means that $T \sim t_{p-1}$.
\end{proof}
Now, we are in position to propose the new generalised and invariant chance-constrained programming method.
\section{Modified chance-constrained programming technique}\label{sec:3}

The \emph{chance-constrained programming problem} to be solved takes the form:
\begin{equation}\label{eqccp}
\begin{array}{c}
  \build{\min}{}{\mathbf{x}} z(\mathbf{x}) = \mathbf{c}'\mathbf{x} \\[2Ex]
  \mbox{subject to:}\\[2ex] 
  \P(\mathbf{a}'_{i}\mathbf{x} \leq b_{i}) \geq (1-\alpha_{i}), \quad i =1,2,\dots,m,  \\[2ex]
  x_{j} \geq 0, \ j =1,2,\dots,n, 
  \end{array}
\end{equation}
where $\mathbf{x} = (x_{1} \ x_{2} \cdots x_{n})' \in \Re^{n}$ (are the decision variables), $\P(\cdot)$ denotes the probability and $\alpha_{i} \in\ (0,1), \ i = 1,2,\dots,m$, $\mathbf{c} = (c_{1} \ c_{2} \cdots c_{n})' \in \Re^{n}$, $\mathbf{a}_{i} \in \Re^{n}$, $i = 1,2,\dots,m$, $\mathbf{A}' = (\mathbf{a}_{1} \ \mathbf{a}_{2} \cdots \mathbf{a}_{m})$, $\mathbf{A} \in \Re^{m \times n}$ and $\mathbf{b} = (b_{1} \ b_{2} \cdots b_{n})' \in \Re^{m}$. In this setting, there are four situations of interest. The first 3 cases consider only one isolated random class, namely: 1) $\mathbf{c}$ is random; 2) $\mathbf{A}$ is random; 3) $\mathbf{b}$ is random. The final case allows simultaneous randomness in $\mathbf{c}$, $\mathbf{A}$, and $\mathbf{b}$. In addition, \citet{chc:59} assume that $\mathbf{c}$, $\mathbf{a}_{i}, \ i=1,2,\dots,m,$ and $\mathbf{b}$ have a multivariate normal distribution, such that their parameters (expectation and covariance matrix) are known. Then, with these assumptions, \citet{chc:59} propose the corresponding equivalent deterministic problems.

Now, our invariant new approach sets that 
\begin{itemize}
  \item $\mathbf{c} \sim \mathcal{E}_{n}(\boldgreek{\mu}_{\mathbf{c}}, \mathbf{\Sigma}_{\mathbf{c}}; g_{\mathbf{c}})$, with
     $$
       \boldgreek{\mu}_{\mathbf{c}} =
           \left(
             \begin{array}{c}
               \mu_{c_{1}} \\
               \mu_{c_{2}} \\
               \vdots \\
               \mu_{c_{n}} \\
             \end{array}
           \right), \quad 
           \mathbf{\Sigma}_{\mathbf{c}} =
             \left(
             \begin{array}{cccc}
                \sigma^{2}_{c_{1}} & \sigma_{c_{1},c_{2}} & \cdots & \sigma_{c_{1},c_{n}} \\
                \sigma_{c_{2},c_{1}} & \sigma^{2}_{c_{2}} & \cdots & \sigma_{c_{2},c_{n}} \\
                \vdots & \vdots & \ddots & \vdots \\
                \sigma_{c_{n},c_{1}} & \sigma_{c_{n},c_{2}} & \cdots & \sigma^{2}_{c_{n}} \\
             \end{array}
             \right),
     $$
     where $\E(c_{i}) = \mu_{c_{i}}$, $\Var(c_{i}) = \sigma^{2}_{c_{i}}$ and $\Cov(c_{i},c_{j}) = \sigma_{c_{i},c_{j}}$, $i, j = 1,2,\dots,n$.
  \item $\Vec(\mathbf{A})' \sim \mathcal{E}_{nm}(\Vec(\boldgreek{\mu}'_{\mathbf{A}}), \mathbf{\Sigma}_{\mathbf{A}}, g_{\mathbf{A}}),$ or $\mathbf{a}_{i} \sim \mathcal{E}_{n}(\boldgreek{\mu}_{\mathbf{a}_{i}}, \mathbf{\Sigma}_{\mathbf{a}_{i}}; g_{\mathbf{a}_{i}})$, \newline $i =1,2,\dots,m$, uncorrelated, where
     $$
       \boldgreek{\mu}_{\mathbf{a}_{i}} =
           \left(
             \begin{array}{c}
               \mu_{a_{i1}} \\
               \mu_{a_{i2}} \\
               \vdots \\
               \mu_{a_{in}} \\
             \end{array}
           \right), \quad 
           \mathbf{\Sigma}_{\mathbf{a}_{i}} =
             \left(
             \begin{array}{cccc}
                \sigma^{2}_{a_{i1}} & \sigma_{a_{i1},a_{i2}} & \cdots & \sigma_{a_{i1},a_{in}} \\
                \sigma_{a_{i2},a_{i1}} & \sigma^{2}_{a_{i2}} & \cdots & \sigma_{a_{i2},a_{in}} \\
                \vdots & \vdots & \ddots & \vdots \\
                \sigma_{a_{in},a_{i1}} & \sigma_{a_{in},a_{i2}} & \cdots & \sigma^{2}_{a_{in}} \\
             \end{array}
             \right),
     $$
     with $\E(a_{ij}) = \mu_{a_{ij}}$, $\Var(a_{ij}) = \sigma^{2}_{a_{ij}}$ and $\Cov(a_{ij},a_{ik}) = \sigma_{a_{ij},a_{ik}}$, $i, j, k = 1,2,\dots,n$.
     And denoting $ \mathbf{\Sigma}_{\mathbf{A}} = \Cov(\Vec(\mathbf{A}'))$
     $$
       \Vec(\boldgreek{\mu}'_{\mathbf{A}}) = 
                                        \left(
                                          \begin{array}{c}
                                            \boldgreek{\mu}_{\mathbf{a}_{1}} \\
                                            \boldgreek{\mu}_{\mathbf{a}_{2}}  \\
                                            \vdots \\
                                            \boldgreek{\mu}_{\mathbf{a}_{m}}  \\
                                          \end{array}
                                        \right) \qquad
      \mathbf{\Sigma}_{\mathbf{A}} = \left(
                                         \begin{array}{cccc}
                                             \mathbf{\Sigma}_{\mathbf{a}_{1}} & \mathbf{0} & \cdots & \mathbf{0} \\
                                             \mathbf{0} & \mathbf{\Sigma}_{\mathbf{a}_{2}} & \cdots & \mathbf{0} \\
                                             \vdots & \vdots & \ddots & \vdots \\
                                             \mathbf{0} & \mathbf{0} & \cdots & \mathbf{\Sigma}_{\mathbf{a}_{m}} \\
                                         \end{array}
                                      \right),
     $$ 
     and  
  \item $\mathbf{b} \sim \mathcal{E}_{n}(\boldgreek{\mu}_{\mathbf{b}}, \mathbf{\Sigma}_{\mathbf{b}}; g_{\mathbf{b}})$, where 
     $$
       \boldgreek{\mu}_{\mathbf{b}} =
           \left(
             \begin{array}{c}
               \mu_{b_{1}} \\
               \mu_{b_{2}} \\
               \vdots \\
               \mu_{b_{n}} \\
             \end{array}
           \right), \quad 
           \mathbf{\Sigma}_{\mathbf{b}} =
             \left(
             \begin{array}{cccc}
                \sigma^{2}_{b_{1}} & \sigma_{b_{1},b_{2}} & \cdots & \sigma_{b_{1},b_{n}} \\
                \sigma_{b_{2},b_{1}} & \sigma^{2}_{b_{2}} & \cdots & \sigma_{b_{2},b_{n}} \\
                \vdots & \vdots & \ddots & \vdots \\
                \sigma_{b_{n},b_{1}} & \sigma_{b_{n},b_{2}} & \cdots & \sigma^{2}_{b_{n}} \\
             \end{array}
             \right),
     $$
     and $\E(b_{i}) = \mu_{b_{i}}$, $\Var(b_{i}) = \sigma^{2}_{b_{i}}$ and $\Cov(b_{i},b_{j}) = \sigma_{b_{i},b_{j}}$, $i, j = 1,2,\dots,n$. 
\end{itemize}
Furthermore, the following random samples exist:
\begin{itemize}
  \item Let $\mathbf{c}_{1}, \mathbf{c}_{2}, \dots, \mathbf{c}_{N_{\mathbf{c}}}$ be a random sample such that if $\mathbf{C}' =(\mathbf{c}_{1}\ \mathbf{c}_{2}\ \cdots\ \mathbf{c}_{N_{\mathbf{c}}})$, then
    $$
      \mathbf{C} = \left(
           \begin{array}{c}
             \mathbf{c}'_{1} \\
             \mathbf{c}'_{2} \\
             \vdots \\
             \mathbf{c}'_{N_{\mathbf{c}}} \\
           \end{array}
         \right) \sim \mathcal{E}_{N_{\mathbf{c}}          
         \times n}(\mathbf{1}_{N_{\mathbf{c}}}\boldgreek{\mu}_{\mathbf{c}}',\mathbf{I}_{N_{\mathbf{c}}} \otimes \mathbf{\Sigma}_{\mathbf{c}}; g_{\mathbf{c}}),
    $$
  \item Also, let $\mathbf{a}_{i_{1}}, \mathbf{a}_{i_{2}}, \dots, \mathbf{a}_{i_{N_{\mathbf{a}_{i}}}}$ be a random samples such that
    $$
      \mathbf{A}_{i} = \left(
           \begin{array}{c}
             \mathbf{a}'_{i_{1}} \\
             \mathbf{a}'_{i_{2}} \\
             \vdots \\
             \mathbf{a}'_{i_{N_{\mathbf{a}_{i}}}} \\
           \end{array}
         \right) 
         \sim \mathcal{E}_{N_{\mathbf{a}_{i}}       
         \times n} (\mathbf{1}_{N_{\mathbf{a}_{i}}}\boldgreek{\mu}'_{\mathbf{a}_{i}}, \mathbf{I}_{N_{\mathbf{a}_{i}}} \otimes \mathbf{\Sigma}_{\mathbf{a}_{i}}; g_{\mathbf{a}_{i}}), \ i = 1, 2, \dots, m,
    $$ 
    and
  \item Let $\mathbf{b}_{1}, \mathbf{b}_{2}, \dots, \mathbf{b}_{N_{\mathbf{b}}}$ be a random sample whose elements are the rows of the random matrix $\mathbf{B}$, such that  
       $$
      \mathbf{B} = \left(
           \begin{array}{c}
             \mathbf{b}'_{1} \\
             \mathbf{b}'_{2} \\
             \vdots \\
             \mathbf{b}'_{N_{\mathbf{c}}} \\
           \end{array}
         \right) \sim \mathcal{E}_{N_{\mathbf{b}}          
         \times n}(\mathbf{1}_{N_{\mathbf{b}}}\boldgreek{\mu}_{\mathbf{b}}',\mathbf{I}_{N_{\mathbf{b}}} \otimes \mathbf{\Sigma}_{\mathbf{b}}; g_{\mathbf{b}}).
    $$
\end{itemize}
In terms of these random samples the following \textbf{\emph{maximum likelihood estimators}} $(\widetilde{\boldgreek{\mu}}, \widetilde{\mathbf{\Sigma}})$  of the parameters $(\boldgreek{\mu}, \mathbf{\Sigma})$ are obtained, see \citet[Section 4.1.1, pp.127-130]{fz:90} and \citet[Section 7.1, p. 173]{gvb:13}, for $N_{\mathbf{c}} \geq n$, $N_{\mathbf{a}_{i}} \geq n, \ i =1,2,\dots,m,$ and $N_{\mathbf{b}} \geq n$:
\begin{itemize}
    \item $\left(\widetilde{\boldgreek{\mu}}_{\mathbf{c}}, \widetilde{\mathbf{\Sigma}}_{\mathbf{c}}\right) = \left(\overline{\mathbf{c}}, \lambda_{max}(h_{\mathbf{c}}) \ \mathbf{S}_{\mathbf{c}} \right)$,
    \item $\left(\widetilde{\boldgreek{\mu}}_{\mathbf{a}_{i}}, \widetilde{\mathbf{\Sigma}}_{\mathbf{a}_{i}}\right) = \left(\overline{\mathbf{a}}_{i}, \lambda_{max}(h_{\mathbf{a}_{i}}) \ \mathbf{S}_{\mathbf{a}_{i}} \right), \ i = 1,2,\dots,m,$ and 
    \item $\left(\widetilde{\boldgreek{\mu}}_{\mathbf{b}}, \widetilde{\mathbf{\Sigma}}_{\mathbf{b}}\right) = \left(\overline{\mathbf{b}}, \lambda_{max}(h_{\mathbf{b}}) \ \mathbf{S}_{\mathbf{b}} \right)$,
  \end{itemize}
where $\lambda_{max}(\cdot)$ is defined in \citet[Lemma 4.1.2, p. 129]{fz:90} as the maximum of the function
$$
  f(\lambda) = \lambda^{-pn/2} g(n/\lambda);
$$
where $g(\cdot)$ is respectively given by $g_{\mathbf{c}}(\cdot)$, $g_{\mathbf{b}}(\cdot)$ and $g_{\mathbf{a}_{i}}(\cdot), \ \ i = 1,2,\dots, m$, and $p = N_{\mathbf{c}},\ N_{\mathbf{\mathbf{a}}_{i}}, \ i = 1,2,\dots, m, \ N_{\mathbf{b}}$.

Also, the following \textbf{\emph{sample means}} ($\overline{\mathbf{c}}$, $\overline{\mathbf{b}}$ and $\overline{\mathbf{a}_{i}}, \ i = 1,2,\dots,m$) and \textbf{\emph{sample covariance matrices}} $\left(\frac{1}{N_{\mathbf{c}}}\mathbf{S}_{\mathbf{c}}, \ \frac{1}{N_{\mathbf{b}}}\mathbf{S}_{\mathbf{b}}, \ \frac{1}{N_{\mathbf{a}_{i}}}\mathbf{S}_{\mathbf{a}_{i}}, \ i =1,2,\dots,m\right)$ are given:
\begin{itemize}
  \item $$
          \overline{\mathbf{c}} = \displaystyle\frac{1}{N_{\mathbf{c}}} \sum_{k =1}^{N_{\mathbf{c}}}  \mathbf{c}_{k},
        $$
        and
        $$
          \mathbf{S}_{\mathbf{c}} = \displaystyle\sum_{k =1}^{N_{\mathbf{c}}} (\mathbf{c}_{k}-\overline{\mathbf{c}})(\mathbf{c}_{k}-\overline{\mathbf{c}})' = \mathbf{C}'\left(\mathbf{I}_{N_{\mathbf{c}}} - \frac{1}{N_{\mathbf{c}}} \mathbf{1}_{N_{\mathbf{c}}} \mathbf{1}'_{N_{\mathbf{c}}}\right)\mathbf{C}.
        $$
  \item For $i =1,2,\dots,m$ we have that
        $$
          \overline{\mathbf{a}}_{i} = \frac{1}{N_{\mathbf{a}_{i}}} \displaystyle\sum_{k =1}^{N_{\mathbf{a}_{i}}} \mathbf{a}_{i_{k}},
        $$
        and
        $$ 
          \mathbf{S}_{\mathbf{a}_{i}} = \displaystyle\sum_{k =1}^{N_{\mathbf{a}_{i}}} (\mathbf{a}_{i_{k}} -\overline{\mathbf{a}}_{i})(\mathbf{a}_{i_{k}}-\overline{\mathbf{a}}_{i})'  = \mathbf{A}'_{i}\left(\mathbf{I}_{N_{\mathbf{a}_{i}}} - \frac{1}{N_{\mathbf{a}_{i}}} \mathbf{1}_{N_{\mathbf{a}_{i}}} \mathbf{1}'_{N_{\mathbf{a}_{i}}}\right)\mathbf{A}_{i}; 
        $$ 
        similarly,
  \item $$
         \overline{\mathbf{b}} = \displaystyle \frac{1}{N_{\mathbf{b}}} \sum_{k =1}^{N_{\mathbf{b}}} \mathbf{c}_{k},
        $$ 
        $$
          \mathbf{S}_{\mathbf{b}} = \displaystyle\sum_{k =1}^{N_{\mathbf{b}}} (\mathbf{b}_{k}-\overline{\mathbf{b}})(\mathbf{b}_{k}-\overline{\mathbf{b}})' = \mathbf{B}'\left(\mathbf{I}_{N_{\mathbf{b}}} - \frac{1}{N_{\mathbf{b}}} \mathbf{1}_{N_{\mathbf{b}}} \mathbf{1}'_{N_{\mathbf{b}}}\right)\mathbf{B}.
        $$
\end{itemize}

Thus, \citet[Lemma 4.3.1, p. 138]{fz:90} provides the following unbiased estimators: 
\begin{itemize}
    \item $\left(\widehat{\boldgreek{\mu}}_{\mathbf{c}}, \widehat{\mathbf{\Sigma}}_{\mathbf{c}}\right) = \left(\overline{\mathbf{c}}, \displaystyle\frac{1}{2(1-N_{\mathbf{c}})\phi_{\mathbf{c}}'(0)}\ \mathbf{S}_{\mathbf{c}} \right)$,
    \item $\left(\widehat{\boldgreek{\mu}}_{\mathbf{a}_{i}}, \widehat{\mathbf{\Sigma}}_{\mathbf{a}_{i}}\right) = \left(\overline{\mathbf{a}}_{i}, \displaystyle\frac{1}{2(1-N_{\mathbf{a}_{i}})\phi_{\mathbf{a}_{i}}'(0)} \ \mathbf{S}_{\mathbf{a}_{i}} \right), \ i = 1,2,\dots,m,$ and 
    \item $\left(\widehat{\boldgreek{\mu}}_{\mathbf{b}}, \widehat{\mathbf{\Sigma}}_{\mathbf{b}}\right) = \left(\overline{\mathbf{b}}, \displaystyle\frac{1}{2(1-N_{\mathbf{b}})\phi_{\mathbf{b}}'(0)} \ \mathbf{S}_{\mathbf{b}} \right)$.
  \end{itemize}
  
In addition, Theorems \ref{teo2} and \ref{teo3} provide the following stochastic linear programming scenarios.

\subsection{Case I. The vector of coefficients $\mathbf{c}\in \Re^{n}$ is random.}

In this case the vectors $\mathbf{b}$ and $\mathbf{a}_{i}, \ i = 1,2, \dots,m,$ are non random. Hence, the chance-constrained programming problem is
\begin{equation}\label{eqcp}
\begin{array}{c}
  \build{\min}{}{\mathbf{x}} z(\mathbf{x}) = \overline{\mathbf{c}}'\mathbf{x} \\[2Ex]
  \mbox{subject to:}\\[2ex] 
  \mathbf{a}'_{i}\mathbf{x} \leq b_{i}, \quad i =1,2,\dots,m,  \\[2ex]
  x_{j} \geq 0, \ j =1,2,\dots,n. 
  \end{array}
\end{equation}
Hence, by Theorem \ref{teo:1}, $\overline{\mathbf{c}} \sim \mathcal{E}_{n}\left(\boldgreek{\mu}_{\mathbf{c}}, \frac{1}{N_{\mathbf{c}}}\mathbf{\Sigma}_{\mathbf{c}}; g_{\mathbf{c}}\right)$, and
$$ 
  z(\mathbf{x}) =\overline{\mathbf{c}}'\mathbf{x} \sim \mathcal{E}_{1}\left(\boldgreek{\mu}_{\mathbf{c}}'\mathbf{x}, \frac{1}{N_{\mathbf{c}}}\mathbf{x}'\mathbf{\Sigma}_{\mathbf{c}}\mathbf{x}; g_{\mathbf{c}}\right) \equiv \mathcal{E}_{1}(\E(z(\mathbf{x})), \Var(z(\mathbf{x}));g_{\mathbf{c}}),
$$
where $\E(z(\mathbf{x})) = \boldgreek{\mu}_{\mathbf{c}}'\mathbf{x}$ and $\Var(z(\mathbf{x})) = N_{\mathbf{c}}^{-1}\mathbf{x}'\mathbf{\Sigma}_{\mathbf{c}}\mathbf{x}$. Thus, the emergent \textbf{\emph{equivalent deterministic nonlinear programming problem corresponding to the stochastic programming problem}} (\ref{eqcp}) is stated as 
\begin{equation}\label{eqcpE}
\begin{array}{c}
  \build{\min}{}{\mathbf{x}} Z(\mathbf{x}) = k_{1}\overline{\mathbf{c}}'\mathbf{x} + k_{2} \displaystyle\sqrt{ \widetilde{\Var}(\overline{\mathbf{c}}'\mathbf{x})}\\[2Ex]
  \mbox{subject to:}\\[2ex] 
  \mathbf{a}'_{i}\mathbf{x} \leq b_{i}, \quad i =1,2,\dots,m,  \\[2ex]
  x_{j} \geq 0, \ j =1,2,\dots,n, 
  \end{array}
\end{equation}
where $\widetilde{\Var}(\overline{\mathbf{c}}'\mathbf{x})$ denotes an estimated variance of $\overline{\mathbf{c}}'\mathbf{x}$; and it is a function of $g_{\mathbf{c}}(\cdot)$, if the maximum likelihood or unbiased estimators are used. However, for practical purposes, the unbiased sample covariance matrix ($\mathbf{S}^{*}_{\mathbf{c}} = (N_{\mathbf{c}}-1)^{-1} \mathbf{S}_{\mathbf{c}}$) can be used as an estimator, which is independent of the generating function $g_{\mathbf{c}}(\cdot)$. In this non-indexed kernel case, it just takes the form $\widetilde{\Var}(\overline{\mathbf{c}}'\mathbf{x}) = N_{\mathbf{c}}^{-1}\mathbf{x}'\mathbf{S}^{*}_{\mathbf{c}}\mathbf{x}$. In addition, observe that $k_{1}$ and $k_{2}$ are constants whose values determine the relative importance of $\overline{\mathbf{c}}'\mathbf{x}$ and $\sqrt{N_{\mathbf{c}}^{-1}\mathbf{x}'\mathbf{S}^{*}_{\mathbf{c}}\mathbf{x}}$ in the new objective function $Z(\cdot)$. The constants $k_{1}$ and  $k_{2}$ are such that
$$
  k_{1} \geq 0, \ k_{2} \geq 0 \mbox{ and } k_{1} + k_{2} = 1.
$$
Now, $k_{1} = 0$ privileges the minimization of the variability of $Z(\cdot)$ around its mean value, without considering the mean value of $Z(\cdot)$. Meanwhile, $k_{2} = 0$ indicates that the mean value of $Z(\cdot)$ should be minimized regardless of the variability of $Z(\cdot)$. Similarly, $k_{1}=k_{2}$ reflects an equal importance for the mean value and the standard deviation. Other complementary values weight the corresponding importance for the mean and the variability.
 
If the program pursues a maximization problem, the objective function shall maximize the mean value of the $Z\left(\mathbf{x}\right)$ and minimize the standard deviation, a fact that requires a negative sign for $k_{2}$. This is concluded by recalling that, $\build{\max}{}{\mathbf{x}} f(\mathbf{x})\ = - \build{\min}{}{\mathbf{x}} f(\mathbf{x})$.

When all the random variables $c_{j}$ are non correlated, the objective function reduces to 
$$
  Z(\mathbf{x}) = k_{1}\sum_{j=1}^{n} \overline{c}_{j}x_{j} +k_{2}\sqrt{\sum_{j=1}^{n}\widetilde{\Var}(\overline{c}_{j})x_{j}^{2}}.
$$

\begin{remark}
Observe that the solution (\ref{eqcpE}) also is a solution of the following \textbf{\emph{multi-objective programming problem}}
\begin{equation}\label{meqcpe}
\begin{array}{c}
  \build{\min}{}{\mathbf{x}} 
   \left(
   \begin{array}{c}
     \overline{\mathbf{c}}'\mathbf{x} \\[2ex]
     \displaystyle\sqrt{ \widetilde{\Var}(\overline{\mathbf{c}}'\mathbf{x})} 
   \end{array}
   \right )\\[2ex]
  \mbox{subject to:}\\[2ex] 
  \mathbf{a}'_{i}\mathbf{x} \leq b_{i}, \quad i =1,2,\dots,m,  \\[2ex]
  x_{j} \geq 0, \ j =1,2,\dots,n. 
  \end{array}
\end{equation}
In this case the solution is achieved based in the concept of sum value function proposed by \citet{z:63}. In the context of multi-objective programming, the \textbf{\emph{sum value function}} provides a sufficient condition for \textbf{\emph{Pareto optimality}}; thus the minimum of the sum function value is always Pareto optimality. This observation opens up a set of potential alternative solutions to problem (\ref{eqcp}), in terms of all possible solutions to multi-objective problem (\ref{meqcpe}); for more details see \citet{dgrc:05}.  
\end{remark}

The equivalent deterministic statement for stochastic optimization problems allows the use of several softwares initially thought for classical optimization. 

\begin{example}\label{ex1}

A manufacturing company produces three components of machinery using lathes, milling machines, and grinding machines. Table \ref{tab1} shows the machining times available per week on different machines, the profitability of each part and the machining times required on the different machines for each manufactured component. The goal is to determine the number of machine components I, II and III that should be manufactured per week to maximize profit. 

\medskip
\begin{table}[h!] 
    \centering 
    \caption{Data Example 1} 
    \label{tab1} 
\begin{scriptsize}
\begin{tabular}{l|rrrr}
\hline\hline
  \multirow{2}{*}{Type of machine}  &
\begin{tabular}{ccc}
  \multicolumn{3}{c}{Machining time required per unit(minutes)} \\
  \hline
  Component I & Component II & Component III \\
\end{tabular} &
\begin{tabular}{|l}
   Maximum time\\
   available per\\
   week (minutes)\\
\end{tabular} \\
\hline\hline\noalign{\smallskip}
\end{tabular}
\begin{tabular}{p{25mm}p{2cm}p{2cm}p{2cm}p{2cm}}
   Lathes & $a_{11}$ = 12 & $a_{12}$ = 2 & $a_{13}$ = 4 & $b_{1}$ = 1000 \\
   Milling machine & $a_{21}$ = 7 & $a_{22}$ = 5 & $a_{23}$ = 12 & $b_{2}$ = 1500 \\
   Grinding machine & $a_{31}$ = 2 & $a_{32}$ = 4 & $a_{33}$ = 3.5 & $b_{3}$ = 750 \\
   \hline
   Profit per unit & $\; c_{1}$ = 50 & \hspace{0.8mm} $c_{2}$ = 70 & \hspace{1mm} $c_{3}$ = 70 & \\
   \hline\hline
\end{tabular}
\end{scriptsize}
\end{table}
Initially, the deterministic linear programming problem is considered. Thus, the problem takes the form
\begin{equation}\label{excd}
  \begin{array}{c}
    \build{Max}{}{\mathbf{x}} z(x)= 50 x_{1} + 70 x_{2} + 70 x_{3} \\
    \mbox{Subject to:} \\
    12 x_{1} + 2 x_{2} + 4 x_{3} \leq 1000 \\
    7 x_{1} + 5 x_{2} + 12 x_{3} \leq 1500 \\
    2 x_{1} + 4 x_{2} + 3.5 x_{3} \leq 750 \\
    x_{1}, x_{2}, x_{3} \geq 0 \\
  \end{array}
\end{equation}
Table \ref{tab2} shows the optimal solution provided by the \emph{\textbf{nloptr package}} (\citet{j:08}) of free license \emph{\textbf{software R }} (\citet{r:24}):
\begin{table}[h!] 
    \centering 
    \caption{Optimal solution of the deterministic linear programming problem (\ref{excd})} 
    \label{tab2}
    \begin{tabular}{c|c}
      \hline\hline
      Variables & Optimal Solution\\
      \hline
      $x_{1}$ & 47.45763\\
      $x_{2}$ & 123.7288\\
      $x_{3}$ & 45.76271\\
      \hline  
      $z_{max}$ & 14237.2881\\
      \hline\hline
    \end{tabular}
\end{table} 

Now, consider the stochastic linear programming problem, 
\begin{equation}\label{excs}
  \begin{array}{c}
    \build{Max}{}{\mathbf{x}} z(x)= \overline{c}_{1} x_{1} + \overline{c}_{2} x_{2} + \overline{c}_{3} x_{3} \\
    \mbox{Subject to:} \\
    12 x_{1} + 2 x_{2} + 4 x_{3} \leq 1000 \\
    7 x_{1} + 5 x_{2} + 12 x_{3} \leq 1500 \\
    2 x_{1} + 4 x_{2} + 3.5 x_{3} \leq 750 \\
    x_{1}, x_{2}, x_{3} \geq 0 \\
  \end{array}
\end{equation}
where $\mathbf{c}$ is assumed to be a random vector with some elliptically contoured distribution, for which a random sample $\mathbf{c}_{1}, \mathbf{c}_{3}, \dots, \mathbf{c}_{N\mathbf{c}}$ is available. In this case, a random sample of size of $N_{\mathbf{c}} = 12$ describes the profit per unit of each of the machine component I, II and III. In terms of this sample the following results have been calculated  
$$
  \overline{\mathbf{c}} =
    \left(
      \begin{array}{c}
        50 \\
        70 \\
        70 \\
      \end{array}
    \right), \quad
  \mathbf{S}_{\mathbf{c}}^{*} = \frac{1}{(N_{\mathbf{c}}-1)}
     \left(
       \begin{array}{ccc}
         s^{2}_{c_{1}} & 0 & 0\\
         0 & s^{2}_{c_{2}} & 0 \\
         0 & 0 & s^{2}_{c_{3}} \\
       \end{array}
     \right) = 
     \left(
       \begin{array}{ccc}
         450 & 0 & 0\\
         0 & 2600 & 0 \\
         0 & 0 & 850 \\
       \end{array}
     \right)
$$
Thus, the solution of the stochastic linear programming problem of Equation (\ref{excs}) emerges from the following equivalent deterministic nonlinear programming problem:
\begin{equation}\label{eqcss}
  \begin{array}{c}
    \build{Max}{}{\mathbf{x}} Z(x)= k_{1}(50 x_{1} + 70 x_{2} + 70 x_{3}) - k_{2} \sqrt{(450 x_{1}^{2} + 2600 x_{2}^{2} + 850 x_{3}^{2})/12 }\\
    \mbox{Subject to:} \\
    12 x_{1} + 2 x_{2} + 4 x_{3} \leq 1000 \\
    7 x_{1} + 5 x_{2} + 12 x_{3} \leq 1500 \\
    2 x_{1} + 4 x_{2} + 3.5 x_{3} \leq 750 \\
    x_{1}, x_{2}, x_{3} \geq 0 \\
  \end{array}
\end{equation}
Below, three optimal solutions combining the values of $k_{1}$ and $k_{2}$ are displayed.
\begin{table}[h!] 
    \centering 
    \caption{Optimal solutions of the stochastic nonlinear programming problem (\ref{excs})} 
    \label{tab3}
    \begin{scriptsize}
    \begin{tabular}{c|ccc}
      \hline\hline
       \multirow{2}{*}{Variables}& \multicolumn{3}{c}{Optimal solutions} \\
      \cline{2-4}
       & $k_{1} = 0.1$ and $k_{2}=0.9$ & $k_{1} = 0.5$ and $k_{2}=0.5$ & $k_{1} = 0.9$ and $k_{2}=0.1$\\
      \hline
      $x_{1}$ & 51.34569 & 47.45763 & 47.45763 \\
      $x_{2}$ & 28.87406 & 123.7288 & 123.7288 \\
      $x_{3}$ & 81.52589 & 45.76271 & 45.76271 \\
      \hline
      $Z_{max}$ & 249.9528 & 6176.6103 & 12625.1526 \\
      \hline
      $z_{max}$ & 10295.28 & 14237.29 & 14237.29 \\
      \hline\hline
    \end{tabular}
    \end{scriptsize}
\end{table}
\end{example}

\subsection{Case II. The coefficients $\mathbf{a}_{i}, \ i =1,2,\dots,m$, are random.}

In this stage $\mathbf{c}$ and $\mathbf{b}$ are considered fixed vectors. Thus, the chance-constrained programming problem is 
\begin{equation}\label{eqcaip}
\begin{array}{c}
  \build{\min}{}{\mathbf{x}} z(\mathbf{x}) = \mathbf{c}'\mathbf{x} \\[2Ex]
  \mbox{subject to:}\\[2ex] 
  \P(\overline{\mathbf{a}}'_{i}\mathbf{x} \leq b_{i}) \geq (1-\alpha_{i}), \quad i =1,2,\dots,m,  \\[2ex]
  x_{j} \geq 0, \ j =1,2,\dots,n. 
  \end{array}
\end{equation}
Then by Theorem \ref{teo:1}, $\overline{\mathbf{a}}_{i} \sim \mathcal{E}_{n}\left (\boldgreek{\mu}_{\mathbf{a}_{i}}, \frac{1}{N_{\mathbf{a}_{i}}}\mathbf{\Sigma}_{\mathbf{a}_{i}}; g_{\mathbf{a}_{i}}\right )$, $i =1,2,\dots,m$, uncorrelated. 

Define 
$$
  \overline{d}_{i} = \overline{\mathbf{a}}'_{i}\mathbf{x}, \quad i = 1,2,\dots,m,
$$
and denote $\mathbf{\Sigma} = \sigma^{2}$ for $n =1$; therefore, Theorem \ref{teo:1} provides that
$$
  \overline{d}_{i} \sim \mathcal{E}_{1}(\mu_{\overline{d}_{i}}, \frac{1}{N_{\mathbf{a}_{i}}}\sigma^{2}_{\overline{d}_{i}}; g_{\mathbf{a}_{i}}),
$$
where 
$$ 
  \mu_{\overline{d}_{i}} = \boldgreek{\mu}'_{\mathbf{a}_{i}}\mathbf{x}, \ \sigma^{2}_{\overline{d}_{i}}= \mathbf{x}'\mathbf{\Sigma}_{\mathbf{a}_{i}}\mathbf{x}, \mbox{ and } s^{2}_{\overline{d}_{i}} = \mathbf{x}' \mathbf{S}_{\mathbf{a}_{i}}\mathbf{x}\quad i =1,2,\dots,m.
$$ 

Hence, by Theorem \ref{teo3} we have that  
$$
  T_{i} = \frac{\displaystyle\frac{(\overline{d}_{i}-\mu_{\overline{d}_{i}})}{\sqrt{\displaystyle\frac{\sigma^{2}_{\overline{d}_{i}}}{N_{\mathbf{a}_{i}}}}}}
  {\sqrt{\displaystyle\frac{s^{2}_{\overline{d}_{i}}}{(N_{\mathbf{a}_{i}}-1)\sigma^{2}_{\overline{d}_{i}}}}} = \frac{\sqrt{N_{\mathbf{a}_{i}}}\sqrt{N_{\mathbf{a}_{i}}-1}(\overline{d}_{i}-\mu_{\overline{d}_{i}})}{\sqrt{s^{2}_{\overline{d}_{i}}}};
$$
which is a t-student distribution with $N_{\mathbf{a}_{i}}-1$ degrees of freedom for $i =1,2,\dots,m.$; an invariant property associating the same distribution to the random variables $T_{i}$, no matters the elliptically contoured distribution under consideration.

The variables $T_{i}$ can be written in terms of the \textbf{\emph{unbiased sample variances}} $s^{2*} = s^{2}/(N-1)$ or \textbf{\emph{biased sample variances}} $s^{2**}_{N} = s^{2}/N$, as follows
$$
  T_{i} = 
  \left\{
    \begin{array}{ll}
      \displaystyle\frac{\sqrt{N_{\mathbf{a}_{i}}}(\overline{d}_{i}-\mu_{\overline{d}_{i}})}{\sqrt{s^{2*}_{\overline{d}_{i}}}}, &  N_{\mathbf{a}_{i}}-1; \\ \\
      \displaystyle\frac{\sqrt{N_{\mathbf{a}_{i}}-1}(\overline{d}_{i}-\mu_{\overline{d}_{i}})}{\sqrt{s^{2**}_{\overline{d}_{i}}}}, & N_{\mathbf{a}_{i}}.
    \end{array}
  \right.
$$
This way, using the unbiased sample variances, we have that
\begin{eqnarray*}
  \P(\overline{\mathbf{a}}'_{i}\mathbf{x}\leq b_{i})  &=& \P\left(\overline{d}_{i} \leq b_{i}\right) \\
   &=& \P \left( \frac{\displaystyle\frac{(\overline{d}_{i} - \mu_{\overline{d}_{i}})}{\sqrt{\displaystyle\frac{\sigma^{2}_{\overline{d}_{i}}}{N_{\mathbf{a}_{i}}}}}}
  {\sqrt{\displaystyle\frac{s^{2}_{\overline{d}_{i}}}{(N_{\mathbf{a}_{i}}-1)\sigma^{2}_{\overline{d}_{i}}}}} \leq \frac{\displaystyle\frac{(b_{i} - \mu_{\overline{d}_{i}})}{\sqrt{\displaystyle\frac{\sigma^{2}_{\overline{d}_{i}}}{N_{\mathbf{a}_{i}}}}}}
  {\sqrt{\displaystyle\frac{s^{2}_{\overline{d}_{i}}}{(N_{\mathbf{a}_{i}}-1)\sigma^{2}_{\overline{d}_{i}}}}}\right)  \\
   &=& \P \left( T_{i} \leq \frac{\displaystyle\frac{(b_{i} - \mu_{\overline{d}_{i}})}{\sqrt{\displaystyle\frac{\sigma^{2}_{\overline{d}_{i}}}{N_{\mathbf{a}_{i}}}}}}
  {\sqrt{\displaystyle\frac{s^{2}_{\overline{d}_{i}}}{(N_{\mathbf{a}_{i}}-1)\sigma^{2}_{\overline{d}_{i}}}}}\right)\\
   &=&  F_{T_{i}}\left (\frac{\sqrt{N_{\mathbf{a}_{i}}}(b_{i}-\mu_{\overline{d}_{i}})}{\sqrt{s^{2*}_{\overline{d}_{i}}}}\right ), \quad i =1,2,\dots,m,  
\end{eqnarray*}
where $ F_{T_{i}}(\cdot)$ denotes the\textbf{\emph{ distribution function}} of a $t$-student random variable with $N_{\mathbf{a}_{i}}-1$ degrees of freedom. Let $\eta_{i} \in \Re$ such that
$$
  F_{T}(\eta_{i}) = 1-\alpha_{i}, \quad i = 1, 2, \dots, m.
$$ 
then
$$
  F_{T}\left (\frac{\sqrt{N_{\mathbf{a}_{i}}}(b_{i}-\mu_{\overline{d}_{i}})}{\sqrt{s^{2*}_{\overline{d}_{i}}}}\right ) \geq F_{T}(\eta_{i}), \quad i =1,2,\dots,m.
$$
Given that $ F_{T}(\cdot)$ is a non-decreasing monotone function, thus
\begin{equation}\label{eq21}
  \frac{\sqrt{N_{\mathbf{a}_{i}}}(b_{i}-\mu_{\overline{d}_{i}})}{\sqrt{s^{2*}_{\overline{d}_{i}}}} \geq \eta_{i}, \quad i =1,2,\dots,m.
\end{equation}
Since $\mu_{\overline{d}_{i}}$, $i = 1, 2, \dots, m$ are unknown, we can use their unbiased estimators. Then, (\ref{eq21}) can be written as:
$$
 \overline{ \mathbf{a}}'_{i}\mathbf{x}+\frac{\eta_{i}}{\sqrt{N_{\mathbf{a}_{i}}}} \sqrt{s^{2*}_{\overline{d}_{i}}} - b_{i} \leq 0 \quad i =1,2,\dots,m;
$$
or alternatively, it can take the form
\begin{equation}\label{eq22}
 \overline{ \mathbf{a}}'_{i}\mathbf{x}+\frac{\eta_{i}}{\sqrt{N_{\mathbf{a}_{i}}}} \sqrt{\mathbf{x}' \mathbf{S}^{*}_{\mathbf{a}_{i}}\mathbf{x}} 
   - b_{i} \leq 0 \quad i =1,2,\dots,m.
\end{equation}
Thus, the \textbf{\emph{equivalent deterministic nonlinear programming problem to the proposed stochastic linear programming problem}} (\ref{eqcaip}) is given by
\begin{equation}\label{eqaipE}
\begin{array}{c}
  \build{\min}{}{\mathbf{x}} z(\mathbf{x}) = \mathbf{c}'\mathbf{x} \\[2Ex]
  \mbox{subject to:}\\[2ex] 
  \overline{ \mathbf{a}}'_{i}\mathbf{x}+\displaystyle\frac{\eta_{i}}{\sqrt{N_{\mathbf{a}_{i}}}} \sqrt{\mathbf{x}' \mathbf{S}^{*}_{\mathbf{a}_{i}}\mathbf{x}} - b_{i} \leq 0, \quad i =1,2,\dots,m,  \\[2ex]
  x_{j} \geq 0, \ j =1,2,\dots,n, 
  \end{array}
\end{equation}
where $\eta_{i} = F^{-1}_{T_{i}}(1-\alpha_{i})$ is the \textbf{\emph{inverse function}}, often defined as the \textbf{\emph{quantile function}} or \textbf{\emph{percent-point function}}. They correspond to a random variable with a $t$-student distribution with $N_{\mathbf{a}_{i}}-1$ freedom degrees, $i =1,2,\dots,m$.

If the random variables $a_{ij}$ are no correlated, $\mathbf{\Sigma}_{\mathbf{a}_{i}}$, $i = 1,2,\dots,m$ and the corresponding $\mathbf{S}^{*}_{\mathbf{a}_{i}}$, $i = 1,2,\dots,m$ are diagonal matrices as
$$
  \mathbf{\Sigma}_{\mathbf{a}_{i}} =
     \left(
       \begin{array}{cccc}
         \sigma^{2}_{a_{i1}} & 0 & \cdots & 0 \\
         0 & \sigma^{2}_{a_{i2}} & \cdots & 0 \\
         \vdots & \vdots & \ddots & \vdots \\
         0 & 0 & \cdots & \sigma^{2}_{a_{in}} \\
       \end{array}
     \right),
$$
and
$$
     \mathbf{S}_{\mathbf{a}_{i}}^{*} = \frac{1}{(N_{\mathbf{a}_{i}}-1)}
     \left(
       \begin{array}{cccc}
         s^{2}_{a_{i1}} & 0 & \cdots & 0 \\
         0 & s^{2}_{a_{i2}} & \cdots & 0 \\
         \vdots & \vdots & \ddots & \vdots \\
         0 & 0 & \cdots & s^{2}_{a_{in}} \\
       \end{array}
     \right),\ i =1,2,\dots,m,
$$
then the constraints in (\ref{eqaipE}) are reduced to
$$
  \sum_{j=1}^{n}\overline{a}_{ij}x_{j}+\displaystyle\frac{\eta_{i}}{\sqrt{N_{\mathbf{a}_{i}}}} \sqrt{\sum_{j=1}^{n}s^{2*}_{a_{ij}}x_{j}^{2}}
   - b_{i} \leq 0, \quad i =1,2,\dots,m.
$$

\begin{example}\label{ex2}
Consider the Example \ref{ex1}. In this scenario, the machining time required on different machines for each component are not known precisely (as they vary from worker to worker) but are known random samples $\mathbf{a}_{i_{1}}, \dots, \mathbf{a}_{i_{N_{\mathbf{a}_{i}}}},$ $i = 1,2,3$. For this example, $N_{\mathbf{a}_{1}} = N_{\mathbf{a}_{2}} = N_{\mathbf{a}_{3}} = N =25$. These samples are the times that the different workers spend on each step (lathe, milling machine and grinding machine) of the machining of each component I, II and III. The samples provided the following estimators:
$$
  \overline{\mathbf{a}}_{1} = 
     \left(
       \begin{array}{c}
         \overline{a}_{11}\\
         \overline{a}_{12}  \\
         \overline{a}_{13}  \\
       \end{array}
     \right) =
     \left(
       \begin{array}{c}
         12 \\
         2 \\
         4 \\
       \end{array}
     \right) \ \ 
   \overline{\mathbf{a}}_{2} = 
     \left(
       \begin{array}{c}
         \overline{a}_{21}\\
         \overline{a}_{22}  \\
         \overline{a}_{23}  \\
       \end{array}
     \right) =
     \left(
       \begin{array}{c}
         7 \\
         5 \\
         12 \\
       \end{array}
     \right) \ \
   \overline{\mathbf{a}}_{3} = 
     \left(
       \begin{array}{c}
         \overline{a}_{31}\\
         \overline{a}_{32}  \\
         \overline{a}_{33}  \\
       \end{array}
     \right) =
     \left(
       \begin{array}{c}
         2 \\
         4 \\
         3.5 \\
       \end{array}
     \right)  
$$
and
$$
  \mathbf{S}_{\mathbf{a}_{1}}^{*} = \frac{1}{(N-1)}
     \mathbf{S}_{\mathbf{a}_{1}} = \frac{1}{(N-1)}
     \left(
       \begin{array}{ccc}
         s^{2}_{a_{1_{11}}} & 0 &  0 \\
         0 & s^{2}_{a_{1_{22}}} &  0 \\
         0 & 0 & s^{2}_{a_{1_{33}}} \\
       \end{array}
     \right) =
     \left(
       \begin{array}{ccc}
         30 & 0 &  0 \\
         0 & 10 &  0 \\
         0 & 0 & 12 \\
       \end{array}
     \right),
$$
$$
  \mathbf{S}_{\mathbf{a}_{2}}^{*} = \frac{1}{(N-1)}
     \left(
       \begin{array}{ccc}
         s^{2}_{a_{2_{11}}} & 0 &  0 \\
         0 & s^{2}_{a_{2_{22}}} &  0 \\
         0 & 0 & s^{2}_{a_{2_{33}}} \\
       \end{array}
     \right) =
     \left(
       \begin{array}{ccc}
         22 & 0 &  0 \\
         0 & 32 &  0 \\
         0 & 0 & 15 \\
       \end{array}
     \right),
$$
and
$$
  \mathbf{S}_{\mathbf{a}_{3}}^{*} = \frac{1}{(N-1)}
     \left(
       \begin{array}{ccc}
         s^{2}_{a_{3_{11}}} & 0 &  0 \\
         0 & s^{2}_{a_{3_{22}}} &  0 \\
         0 & 0 & s^{2}_{a_{3_{33}}} \\
       \end{array}
     \right) =
     \left(
       \begin{array}{ccc}
         15 & 0 &  0 \\
         0 & 14 &  0 \\
         0 & 0 & 9 \\
       \end{array}
     \right).
$$
Here we ask for the number of machine components I, II and III that should be manufactured per week to maximize the profit without exceeding the available machining times. This is, the constraints must be met with a probability of at least 0.99.

Then, taking $\alpha_{1} = \alpha_{2} =\alpha_{3} = 0.01$, the stochastic linear programming problem is stated as 
\begin{equation}\label{exais}
  \begin{array}{c}
    \build{Max}{}{\mathbf{x}} z(x)= 50 x_{1} + 70 x_{2} + 70_{3} x_{3} \\
    \mbox{Subject to:} \\
    \P(\overline{a}_{11} x_{1} + \overline{a}_{12} x_{2} + \overline{a}_{13} x_{3} \leq 1000) \geq 0.99 \\
    \P(\overline{a}_{21} x_{1} + \overline{a}_{22} x_{2} + \overline{a}_{23} x_{3} \leq 1500) \geq 0.99 \\
    \P(\overline{a}_{31} x_{1} + \overline{a}_{32} x_{2} + \overline{a}_{33} x_{3} \leq \ 750) \geq 0.99 \\
    x_{1}, x_{2}, x_{3} \geq 0 \\
  \end{array}
\end{equation}
thus, the equivalent deterministic nonlinear programming problem takes de form
\begin{equation}\label{exaiss}
  \begin{array}{c}
    \build{Max}{}{\mathbf{x}} z(x)= 50 x_{1} + 70 x_{2} + 70_{3} x_{3} \\
    \mbox{Subject to:} \\
    12 x_{1} + 2 x_{2} + 4 x_{3} + 2.492159\sqrt{(30 x_{1}^{2} + 10 x_{2}^{2} + 12 x_{3}^{2})/25}\leq 1000 \\
    7 x_{1} + 5 x_{2} + 12 x_{3} + 2.492159\sqrt{(22 x_{1}^{2} + 32 x_{2}^{2} + 15 x_{3}^{2})/25}\leq 1500 \\
    2 x_{1} + 4 x_{2} + 3.5 x_{3} + 2.492159\sqrt{(15 x_{1}^{2} + 14 x_{2}^{2} + 9 x_{3}^{2})/25}\leq 750 \\
    x_{1}, x_{2}, x_{3} \geq 0 \\
  \end{array}
\end{equation}
where $\eta = F^{-1}_{T}(1-0.01) = 2.492159$ is the percent-point of a random variable with a $t$-student distribution with $24$ freedom degrees.

Packages nloptr (\citet{j:08}) of free license software R (\citet{r:24}), provides the following unique optimal solution for the deterministic nonlinear programming problem (\ref{exaiss}), which also corresponds to the optimal solution of the stochastic linear programming (\ref{exais}). The results are summarised in Table \ref{tab4}.

\begin{table}[h!] 
    \centering 
    \caption{Optimal solution of the stochastic linear programming problem (\ref{exais})} 
    \label{tab4}
    \begin{tabular}{c|c}
      \hline\hline
      Variables & Optimal Solution\\
      \hline
      $x_{1}$ & 38.84635\\
      $x_{2}$ & 81.64707\\
      $x_{3}$ & 46.38850\\
      \hline  
      $z_{max}$ & 10904.8076\\
      \hline\hline
    \end{tabular}
\end{table} 
\end{example}

\subsection{Case III. The coefficients $\mathbf{b}$ are random.}

Now, $\mathbf{c}$ and $\mathbf{a}_{i}$, $i = 1,2,\dots,m,$ are considered fixed vectors. In this scenario, the chance-constrained programming problem is given by 
\begin{equation}\label{eqbp}
\begin{array}{c}
  \build{\min}{}{\mathbf{x}} z(\mathbf{x}) = \mathbf{c}'\mathbf{x} \\[2Ex]
  \mbox{subject to:}\\[2ex] 
  \P(\mathbf{a}'_{i}\mathbf{x} \leq \overline{b}_{i}) \geq (1-\alpha_{i}), \quad i =1,2,\dots,m,  \\[2ex]
  x_{j} \geq 0, \ j =1,2,\dots,n. 
  \end{array}
\end{equation}
In addition, by Theorem \ref{teo:1}, $\overline{\mathbf{b}} = (\overline{b}_{1} \ \overline{b}_{2}\ \cdots \  \overline{b}_{m})' \sim \mathcal{E}_{m}\left (\boldgreek{\mu}_{\mathbf{b}}, \frac{1}{N_{\mathbf{b}}}\mathbf{\Sigma}_{\mathbf{b}}; g_{\mathbf{b}}\right )$ and 
$$
  \overline{b}_{i} \sim \mathcal{E}_{1}\left(\mu_{b_{i}}, \frac{1}{N_{\mathbf{b}}}\sigma^{2}_{b_{i}}; g_{\mathbf{b}}\right ),
$$
where 
$$
  \mathbf{\mu}_{\mathbf{b}} =
       \left (
          \begin{array}{c}
            \mu_{b_{1}} \\
            \mu_{b_{2}} \\
            \vdots \\
            \mu_{b_{m}} \\ 
          \end{array}
       \right ),  
       \quad 
       \mathbf{\Sigma}_{\mathbf{b}} = 
      \left\{
        \begin{array}{ll}
          \sigma^{2}_{b_{i}}, & \hbox{$i =j$;} \\
          \sigma_{_{b_{i}b_{j}}}, & \hbox{$i\neq j$.}
        \end{array}
      \right. , \mbox{ and } 
       \mathbf{S}_{\mathbf{b}} = 
      \left\{
        \begin{array}{ll}
          s^{2}_{b_{i}}, & \hbox{$i =j$;} \\
          s_{_{b_{i}b_{j}}}, & \hbox{$i\neq j$.}
        \end{array}
      \right.
$$
Therefore, for $i =1,2,\dots,m$, Theorem \ref{teo3} states that
$$
  T_{i} = \frac{\displaystyle\frac{(\overline{b}_{i}-\mu_{\overline{b}_{i}})}{\sqrt{\displaystyle\frac{\sigma^{2}_{\overline{b}_{i}}}{N_{\mathbf{b}}}}}}
  {\sqrt{\displaystyle\frac{s^{2}_{\overline{b}_{i}}}{(N_{\mathbf{b}}-1)\sigma^{2}_{\overline{b}_{i}}}}} = 
  \frac{\sqrt{N_{\mathbf{b}}}\sqrt{N_{\mathbf{b}}-1}(\overline{b}_{i}-\mu_{\overline{b}_{i}})}{\sqrt{s^{2}_{\overline{b}_{i}}}}= 
  \frac{\sqrt{N_{\mathbf{b}}}(\overline{b}_{i}-\mu_{\overline{b}_{i}})}{\sqrt{s^{2*}_{\overline{b}_{i}}}}
$$
follows a $t$-student distribution with $N_{\mathbf{b}}-1$ degrees of freedom, which is invariant under any elliptically contoured distribution assumed for the random vector $\mathbf{b}$.

The constraints in (\ref{eqbp}) provide that
\begin{eqnarray}
  \P(\mathbf{a}'_{i}\mathbf{x}\leq \overline{b}_{i})  &=&  \P \left( \frac{\displaystyle\frac{(\mathbf{a}'_{i}\mathbf{x}-\mu_{\overline{b}_{i}})}{\sqrt{\displaystyle\frac{\sigma^{2}_{\overline{b}_{i}}}{N_{\mathbf{b}}}}}}
  {\sqrt{\displaystyle\frac{s^{2}_{\overline{b}_{i}}}{(N_{\mathbf{b}}-1)\sigma^{2}_{\overline{b}_{i}}}}} \leq \frac{\displaystyle\frac{(\overline{b}_{i}-\mu_{\overline{b}_{i}})}{\sqrt{\displaystyle\frac{\sigma^{2}_{\overline{b}_{i}}}{N_{\mathbf{b}}}}}}
  {\sqrt{\displaystyle\frac{s^{2}_{\overline{b}_{i}}}{(N_{\mathbf{b}}-1)\sigma^{2}_{\overline{b}_{i}}}}}\right) \nonumber \\
   &=& \P \left( \frac{\displaystyle\frac{(\mathbf{a}'_{i}\mathbf{x} -\mu_{\overline{b}_{i}})}{\sqrt{\displaystyle\frac{\sigma^{2}_{\overline{b}_{i}}}{N_{\mathbf{b}}}}}}
  {\sqrt{\displaystyle\frac{s^{2}_{\overline{b}_{i}}}{(N_{\mathbf{b}}-1)\sigma^{2}_{\overline{b}_{i}}}}}  \leq T_{i} \right) \nonumber\\
  &=& 1 - \P \left( T_{i}  \leq \frac{\displaystyle\frac{(\mathbf{a}'_{i}\mathbf{x} -\mu_{\overline{b}_{i}})}{\sqrt{\displaystyle\frac{\sigma^{2}_{\overline{b}_{i}}}{N_{\mathbf{b}}}}}}
  {\sqrt{\displaystyle\frac{s^{2}_{\overline{b}_{i}}}{(N_{\mathbf{b}}-1)\sigma^{2}_{\overline{b}_{i}}}}}  \right) \geq 1- \alpha_{i} \nonumber\\
  &=& \P \left( T_{i}  \leq \frac{\displaystyle\frac{(\mathbf{a}'_{i}\mathbf{x} -\mu_{\overline{b}_{i}})}{\sqrt{\displaystyle\frac{\sigma^{2}_{\overline{b}_{i}}}{N_{\mathbf{b}}}}}}
  {\sqrt{\displaystyle\frac{s^{2}_{\overline{b}_{i}}}{(N_{\mathbf{b}}-1)\sigma^{2}_{\overline{b}_{i}}}}}  \right) \leq \alpha_{i} \nonumber\\ \label{resb}
   &=& F_{T_{i}}\left (\frac{\sqrt{N_{\mathbf{b}}}(\mathbf{a}'_{i}\mathbf{x}-\mu_{\overline{b}_{i}})}{\sqrt{s^{2*}_{\overline{b}_{i}}}}\right ) \leq \alpha_{i}, \quad i =1,2,\dots,m,  
\end{eqnarray}
where $ F_{T_{i}}(\cdot)$ denotes the\textbf{\emph{ distribution function}} of a random variable with a $t$-student distribution and $N_{\mathbf{b}}-1$ freedom degrees. Now, if $\delta_{i}$ is the percentile of the $t$-student variate with $N_{\mathbf{b}}-1$ freedom degrees such that
$$
   F_{T_{i}}\left (\delta_{i}\right) = \alpha_{i}, \quad i =1,2,\dots,m,
$$
then, the constraints in (\ref{resb}) can be expressed as
$$
  F_{T_{i}}\left (\frac{\sqrt{N_{\mathbf{b}}}(\mathbf{a}'_{i}\mathbf{x}-\mu_{\overline{b}_{i}})}{\sqrt{s^{2*}_{\overline{b}_{i}}}}\right ) \leq F_{T_{i}}\left (\delta_{i}\right), \quad i =1,2,\dots,m.
$$
Given that $ F_{T}(\cdot)$ is a non-decreasing monotonic function, then
$$
  \frac{\sqrt{N_{\mathbf{b}}}(\mathbf{a}'_{i}\mathbf{x}-\mu_{\overline{b}_{i}})}{\sqrt{s^{2*}_{\overline{b}_{i}}}} \leq \delta_{i}, \quad i =1,2,\dots,m,
$$
or
$$
  \mathbf{a}'_{i}\mathbf{x}-\mu_{\overline{b}_{i}} - \frac{\delta_{i}}{\sqrt{N_{\mathbf{b}}}}\sqrt{s^{2*}_{\overline{b}_{i}}} \leq 0, , \quad i =1,2,\dots,m.
$$
Note that $\mu_{\overline{b}_{i}}$, $i =1,2,\dots,m$ are unknow and can be replaced by their corresponding unbiased estimators $\overline{\mathbf{b}}$. Thus, \textbf{\emph{the stochastic linear programming problem stated in (\ref{eqbp}) is equivalent to the following deterministic linear programming problem}}:
\begin{equation}\label{eqbpE}
\begin{array}{c}
  \build{\min}{}{\mathbf{x}} z(\mathbf{x}) = \mathbf{c}'\mathbf{x} \\[2Ex]
  \mbox{subject to:}\\[2ex] 
  \mathbf{a}'_{i}\mathbf{x}-\overline{b}_{i} - \displaystyle\frac{\delta_{i}}{\sqrt{N_{\mathbf{b}}}}\sqrt{s^{2*}_{\overline{b}_{i}}} \leq 0, , \quad i =1,2,\dots,m,  \\[2ex]
  x_{j} \geq 0, \ j =1,2,\dots,n. 
  \end{array}
\end{equation}

\begin{example}\label{ex3}
Under the conditions of Example \ref{ex1}, assume that the machining times available on different machines are stochastic (these machining times depend on the time spent on maintenance and repair); specifically, consider a random vector $\mathbf{b}$ following an arbitrary elliptically contoured distribution. Furthermore, there is a random sample $\mathbf{b}_{1}, \mathbf{b}_{2}, \dots, \mathbf{b}_{N_{\mathbf{b}}}$, $N_{\mathbf{b}} = 25$ characterized by the following estimators:
\begin{small} 
$$
  \overline{\mathbf{b}} =
    \left(
      \begin{array}{c}
        1000 \\
        1500 \\
        750 \\
      \end{array}
    \right), \quad
  \mathbf{S}_{\mathbf{b}}^{*} = \frac{1}{(N_{\mathbf{b}}-1)}
     \left(
       \begin{array}{ccc}
         s^{2}_{b_{1}} & 0 & 0\\
         0 & s^{2}_{b_{2}} & 0 \\
         0 & 0 & s^{2}_{b_{3}} \\
       \end{array}
     \right) = 
     \left(
       \begin{array}{ccc}
         5000 & 0 & 0\\
         0 & 4000 & 0 \\
         0 & 0 & 500 \\
       \end{array}
     \right).
$$
\end{small}
We are interested in the number of machine components I, II and III that must be manufactured per week in order to maximize the profit. The constraints must be met with a probability of at least 0.99.

For this case, the stochastic linear programming problem is stated as:
\begin{equation}\label{exbs}
  \begin{array}{c}
    \build{Max}{}{\mathbf{x}} z(x)= 50 x_{1} + 70 x_{2} + 70 x_{3} \\
    \mbox{Subject to:} \\
    \P(12 x_{1} + 2 x_{2} + 4 x_{3} \leq \overline{b}_{1}) \geq 0.99 \\
    \P(7 x_{1} + 5 x_{2} + 12 x_{3} \leq \overline{b}_{2}) \geq 0.99 \\
    \P(2 x_{1} + 4 x_{2} + 3.5 x_{3} \leq \overline{b}_{3}) \geq 0.99 \\
    x_{1}, x_{2}, x_{3} \geq 0. \\
  \end{array}
\end{equation}
Hence, the equivalent deterministic linear programming problem is
\begin{equation}\label{exbss}
  \begin{array}{c}
    \build{Max}{}{\mathbf{x}} z(x)= 50 x_{1} + 70 x_{2} + 70_{3} x_{3} \\
    \mbox{Subject to:} \\
    12 x_{1} + 2 x_{2} + 4 x_{3} - (-2.492159)\sqrt{5000/25}\leq 1000 \\
    7 x_{1} + 5 x_{2} + 12 x_{3} - (-2.492159)\sqrt{4000/25}\leq 1500 \\
    2 x_{1} + 4 x_{2} + 3.5 x_{3} - (-2.492159)\sqrt{500/25}\leq 750 \\
    x_{1}, x_{2}, x_{3} \geq 0, \\
  \end{array}
\end{equation}
where $\delta = F^{-1}_{T}(0.01) = -2.492159$ is the percent-point of a $t$-student random variable with $24$ freedom degrees.
The application of nloptr package (\citet{j:08}) of software R (\citet{r:24}) achieves the following unique optimal solution of the stochastic linear programming problem (\ref{exbs}) 
\begin{table}[h!] 
    \centering 
    \caption{Optimal solution of the stochastic linear programming problem (\ref{exbs})} 
    \label{tab5}
    \begin{tabular}{c|c}
      \hline\hline
      Variables & Optimal Solution\\
      \hline
      $x_{1}$ & 44.92657\\
      $x_{2}$ & 122.91980\\
      $x_{3}$ & 44.94930\\
      \hline  
      $z_{max}$ & 13997.1624\\
      \hline\hline
    \end{tabular}
\end{table} 
\end{example}

\subsection{Case IV. The coefficients $\mathbf{c}$, $\mathbf{a}_{i}, \ i =1,2,\dots,m.$ and $\mathbf{b}$ are random.}

In this case, the stochastic linear programming problem is stated as
\begin{equation}\label{eqcaibp}
\begin{array}{c}
  \build{\min}{}{\mathbf{x}} z(\mathbf{x}) = \overline{\mathbf{c}}'\mathbf{x} \\[2Ex]
  \mbox{subject to:}\\[2ex] 
  \P(\overline{\mathbf{a}}'_{i}\mathbf{x} \leq \overline{b}_{i}) \geq (1-\alpha_{i}), \quad i =1,2,\dots,m,  \\[2ex]
  x_{j} \geq 0, \ j =1,2,\dots,n. 
  \end{array}
\end{equation}
The objective function involves only the random vector $\mathbf{c}$, then we can consider the same cost $Z(\mathbf{x})$ of (\ref{eqcpE}). The remaining randomness is arranged in the samples $\mathbf{g}_{i1}, \mathbf{g}_{i2}, \dots, \mathbf{g}_{iN}$, $i = 1,2,\dots,m$, such that
$$
  \mathbf{g}_{ij} =
    \left(
      \begin{array}{c}
        \mathbf{a}_{ik} \\
        b_{ik} \\
      \end{array}
    \right), i = 1,2,\dots,m, \ k = 1,2,\dots,N,
$$
where
$$
  \mathbf{G}_{i} = \left(
     \begin{array}{c}
        \mathbf{g}'_{i_{1}} \\
        \mathbf{g}'_{i_{2}} \\
        \vdots \\
        \mathbf{g}'_{i_{N}} \\
      \end{array}
      \right) \sim \mathcal{E}_{N \times n} (\mathbf{1}_{N}\boldgreek{\mu}'_{\mathbf{g}_{i}}, \mathbf{I}_{N} 
       \otimes \mathbf{\Sigma}_{\mathbf{g}_{i}}; g_{\mathbf{g}_{i}}), \ i = 1, 2, \dots, m,
$$ 
and for $i =1,2,\dots,m$, we have the estimators
$$
  \overline{\mathbf{g}}_{i} = \frac{1}{N} \displaystyle\sum_{k =1}^{N} \mathbf{g}_{i_{k}} = 
    \left (
        \begin{array}{c}
          \overline{\mathbf{a}}_{i} \\
         \overline{b}_{i} 
        \end{array}
     \right ),
$$
and
$$ 
  \mathbf{S}_{\mathbf{g}_{i}} = \displaystyle\sum_{k =1}^{N} (\mathbf{g}_{i_{k}} -\overline{\mathbf{g}}_{i})(\mathbf{g}_{i_{k}} - 
  \overline{\mathbf{g}}_{i})'  = \mathbf{G}'_{i}\left(\mathbf{I}_{N} - \frac{1}{N} \mathbf{1}_{N} \mathbf{1}'_{N}\right)\mathbf{G}_{i}. 
$$
Recall that $\mathbf{S}^{*}_{\mathbf{g}_{i}} = (N-1)^{-1} \mathbf{S}_{\mathbf{g}_{i}}$. 
Therefore by Theorem \ref{teo:1}, 
$$
  \overline{\mathbf{g}}_{i} \sim \mathcal{E}_{n+1}(\boldgreek{\mu}_{\mathbf{g}_{i}},\frac{1}{N}\mathbf{\Sigma}_{\mathbf{g}_{i}}, g_{\mathbf{g}_{i}}),
$$
where
$$
  \E(\overline{\mathbf{g}}_{i}) = \boldgreek{\mu}_{\mathbf{g}_{i}} = 
     \left (
        \begin{array}{c}
          \boldgreek{\mu}_{\mathbf{a}_{i}} \\
         \mu_{{b}_{i}} 
        \end{array}
     \right ),\quad i =1,2,\dots,m,
$$
$$
  \mathbf{\Sigma}_{\mathbf{g}_{i}} = 
     \left (
        \begin{array}{cc}
          \mathbf{\Sigma}_{\mathbf{a}_{i}} & \mathbf{v}_{i} \\
          \mathbf{v}'_{i} & \sigma^{2}_{b_{i}} 
        \end{array}
     \right ), \quad i =1,2,\dots,m,
$$
and 
$$
  \mathbf{v}_{i} \propto 
     \left (
        \begin{array}{c}
          \Cov(\overline{a}_{i1},\overline{b}_{1})\\
          \Cov(\overline{a}_{i2},\overline{b}_{2}) \\
          \vdots \\
          \Cov(\overline{a}_{in},\overline{b}_{n}) 
        \end{array}
     \right ), \quad i =1,2,\dots,m.
$$

Note that this stage assumes $N_{\mathbf{a}_{i}} = N_{\mathbf{b}} = N$, and $g_{\mathbf{g}_{i}}$ as the joint kernel of $\mathbf{g}_{i}$. 

Thus, the constraints in (\ref{eqcaibp}) take the form
$$
  \P\left (\overline{q}_{i}(\mathbf{x}) \leq 0 \right ) \geq 1-\alpha_{i}, \quad i =1,2,\dots,m,
$$
where 
$$
  \overline{q}_{i}(\mathbf{x}) = \overline{\mathbf{a}}'_{i}\mathbf{x} - \overline{b}_{i} = \overline{\mathbf{g}}'_{i}\mathbf{y}, \quad i =1,2,\dots,m,
$$
and 
$$
  \mathbf{y} =
    \left (
        \begin{array}{c}
          \mathbf{x} \\
          -1 
        \end{array}
     \right ) \in \Re^{n+1}.
$$ 

Observe that $ \overline{q}_{i}(\mathbf{x})$, $i =1,2,\dots,m$, are given by a linear combination of elliptically contoured distributed random vectors $\mathbf{g}_{i}$, then Theorem \ref{teo:1} also attributes them an elliptically contoured distribution. The mean and variance of $ \overline{q}_{i}(\mathbf{x})$ are given by
$$
  \boldgreek{\mu}_{\overline{q}_{i}(\mathbf{x})} = \boldgreek{\mu}'_{\mathbf{g}_{i}}\mathbf{y}, \quad i =1,2,\dots,m,   
$$
and 
$$
  \sigma^{2}_{\overline{q}_{i}(\mathbf{x})} = \frac{1}{N}\mathbf{y}'\mathbf{\Sigma}_{\mathbf{g}_{i}}\mathbf{y}, \quad i =1,2,\dots,m,
$$ 
Then, $\overline{q}_{i}(\mathbf{x}) \sim \mathcal{E}_{n}(\boldgreek{\mu}'_{\mathbf{g}_{i}}\mathbf{y},\frac{1}{N}\mathbf{y}'\mathbf{\Sigma}_{\mathbf{g}_{i}}\mathbf{y},g_{\mathbf{g}_{i}})$, 
 and the constraints (\ref{eqcaibp}) can be restated as
\begin{eqnarray*}
  \P\left (\overline{q}_{i}(\mathbf{x}) \leq 0 \right )  &=& \P \left( \frac{\displaystyle\frac{(\overline{q}_{i}(\mathbf{x}) - \boldgreek{\mu}_{\overline{q}_{i}(\mathbf{x})})}{\sqrt{\displaystyle\frac{\sigma^{2}_{\overline{q}_{i}(\mathbf{x})}}{N}}}}
  {\sqrt{\displaystyle\frac{s^{2}_{\overline{q}_{i}(\mathbf{x})}}{(N-1)\sigma^{2}_{\overline{q}_{i}(\mathbf{x})}}}} \leq  \frac{\displaystyle\frac{ - \boldgreek{\mu}_{\overline{q}_{i}(\mathbf{x})}}{\sqrt{\displaystyle\frac{\sigma^{2}_{\overline{q}_{i}(\mathbf{x})}}{N}}}}
  {\sqrt{\displaystyle\frac{s^{2}_{\overline{q}_{i}(\mathbf{x})}}{(N-1)\sigma^{2}_{\overline{q}_{i}(\mathbf{x})}}}}\right)  \\
   &=& \P \left( T_{i} \leq\frac{\displaystyle\frac{ - \boldgreek{\mu}_{\overline{q}_{i}(\mathbf{x})}}{\sqrt{\displaystyle\frac{\sigma^{2}_{\overline{q}_{i}(\mathbf{x})}}{N}}}}
  {\sqrt{\displaystyle\frac{s^{2}_{\overline{q}_{i}(\mathbf{x})}}{(N-1)\sigma^{2}_{\overline{q}_{i}(\mathbf{x})}}}}\right) \\
   &=&  F_{T_{i}}\left (\frac{- \sqrt{N} \ \mu_{\overline{q}_{i}(\mathbf{x})}}{\sqrt{s^{2*}_{\overline{q}_{i}(\mathbf{x})}}}\right ), \quad i =1,2,\dots,m,  
\end{eqnarray*}
where $s^{2*}_{\overline{q}_{i}(\mathbf{x})} = \frac{1}{N-1}\mathbf{y}'\mathbf{S}^{*}_{\mathbf{g}_{i}}\mathbf{y}$, $i =1,2,\dots,m,$ and $ F_{T_{i}}(\cdot)$ denotes the $t$-student distribution with $N-1$ freedom degrees. Denote $\tau_{i}$ as the percentile of the $t$-student variate with $N-1$ freedom degrees, such that
$$
   F_{T_{i}}\left (\tau_{i}\right) = 1 - \alpha_{i}, \quad i =1,2,\dots,m,
$$
then, the constraints (\ref{resb}) are given by
$$
  F_{T_{i}}\left (\frac{- \sqrt{N} \ \mu_{\overline{q}_{i}(\mathbf{x})}}{\sqrt{s^{2*}_{\overline{q}_{i}(\mathbf{x})}}}\right ) \geq F_{T}(\tau_{i}), \quad i =1,2,\dots,m,
$$
Taking into account that $ F_{T}(\cdot)$ is a non-decreasing monotonic function, these inequalities shall be satisfied only if the following deterministic nonlinear inequalities hold
\begin{equation}\label{eq23}
   \frac{- \sqrt{N} \ \mu_{\overline{q}_{i}(\mathbf{x})}}{\sqrt{s^{2*}_{\overline{q}_{i}(\mathbf{x})}}} \geq \tau_{i}, \quad i =1,2,\dots,m,
\end{equation}
Since $\mu_{\overline{q}_{i}(\mathbf{x})}$, $i = 1, 2, \dots, m$, are unknown, their unbiased estimator can be used instead. Finally, (\ref{eq23}) can be expressed as 
$$
 \overline{q}_{i}(\mathbf{x}) + \frac{\tau_{i}}{\sqrt{N}} \sqrt{s^{2*}_{\overline{q}_{i}(\mathbf{x})}} \leq 0, \quad i =1,2,\dots,m,
$$
Hence, \textbf{\emph{the stochastic linear programming problem  (\ref{eqcaibp}) can be stated as the following equivalent  deterministic nonlinear programming problem}}:
\begin{equation}\label{eqcaibpE}
\begin{array}{c}
  \build{\min}{}{\mathbf{x}} Z(\mathbf{x}) = k_{1}\overline{\mathbf{c}}'\mathbf{x} + k_{2} \displaystyle\sqrt{ \widetilde{\Var}(\overline{\mathbf{c}}'\mathbf{x})}\\[2Ex]
  \mbox{subject to:}\\[2ex] 
  \overline{q}_{i}(\mathbf{x}) + \displaystyle\frac{\tau_{i}}{\sqrt{N}} \sqrt{s^{2*}_{\overline{q}_{i}(\mathbf{x})}} \leq 0, \quad i =1,2,\dots,m,  \\[2ex]
  x_{j} \geq 0, \ j =1,2,\dots,n. 
  \end{array}
\end{equation}

\begin{example}\label{ex4}
Consider the extreme random scenario where the machining times required, the maximum time available and the unit profits are all assumed to be elliptically contoured distributed. Assume that there are random samples $\mathbf{g}_{i1}, \mathbf{g}_{i2}, \dots, \mathbf{g}_{iN}$, $i = 1,2,\dots,m$ described by the following estimators 
$$
  \overline{\mathbf{c}} =
    \left(
      \begin{array}{c}
        50 \\
        70 \\
        70 \\
      \end{array}
    \right), \quad
  \mathbf{S}_{\mathbf{c}}^{*} = \frac{1}{(N_{\mathbf{c}}-1)}
     \left(
       \begin{array}{ccc}
         s^{2}_{c_{1}} & 0 & 0\\
         0 & s^{2}_{c_{2}} & 0 \\
         0 & 0 & s^{2}_{c_{3}} \\
       \end{array}
     \right) = 
     \left(
       \begin{array}{ccc}
         450 & 0 & 0\\
         0 & 2600 & 0 \\
         0 & 0 & 850 \\
       \end{array}
     \right),
$$
where $N_{\mathbf{c}} = 12$, $\overline{\mathbf{g}}_{1} = (\overline{\mathbf{a}}'_{i}, b_{i})'$
$$
  \overline{\mathbf{g}}_{1} = 
     \left(
       \begin{array}{c}
         \overline{a}_{11}\\
         \overline{a}_{12}\\
         \overline{a}_{13}\\
         \overline{b}_{1}\\
       \end{array}
     \right) =
     \left(
       \begin{array}{c}
         12 \\
         2 \\
         4 \\
         1000\\
       \end{array}
     \right) \ \ 
   \overline{\mathbf{g}}_{2} = 
     \left(
       \begin{array}{c}
         \overline{a}_{21}\\
         \overline{a}_{22}\\
         \overline{a}_{23}\\
         \overline{b}_{2}\\
       \end{array}
     \right) =
     \left(
       \begin{array}{c}
         7 \\
         5 \\
         12 \\
         1500\\
       \end{array}
     \right)
$$
$$
   \overline{\mathbf{g}}_{3} = 
     \left(
       \begin{array}{c}
         \overline{a}_{31}\\
         \overline{a}_{32}\\
         \overline{a}_{33}\\
         \overline{b}_{3}\\
       \end{array}
     \right) =
     \left(
       \begin{array}{c}
         2 \\
         4 \\
         3.5\\
         750\\
       \end{array}
     \right),  
$$ 
and
$$
  \mathbf{S}_{\mathbf{g}_{1}}^{*} =  \frac{1}{(N-1)}
     \left(
       \begin{array}{cccc}
         s^{2}_{a_{1_{11}}} & 0 &  0 & 0\\
         0 & s^{2}_{a_{1_{22}}} &  0 & 0\\
         0 & 0 & s^{2}_{a_{1_{33}}} & 0\\
         0 & 0 & 0 & s^{2}_{b_{1}}\\
       \end{array}
     \right) =
     \left(
       \begin{array}{cccc}
         30 & 0 & 0  & 0\\
         0 & 10 & 0  & 0\\
         0 & 0 & 12  & 0\\
         0 & 0 & 0  & 5000\\
       \end{array}
     \right),
$$
$$
  \mathbf{S}_{\mathbf{g}_{2}}^{*} = \frac{1}{(N-1)}
     \left(
       \begin{array}{cccc}
         s^{2}_{a_{2_{11}}} & 0 &  0 \\
         0 & s^{2}_{a_{2_{22}}} &  0 \\
         0 & 0 & s^{2}_{a_{2_{33}}} & 0 \\
         0 & 0 & 0 & s^{2}_{b_{2}} \\
       \end{array}
     \right) =
     \left(
       \begin{array}{cccc}
         22 & 0 &  0 & 0\\
         0 & 32 &  0 & 0\\
         0 & 0 & 15 & 0\\
         0 & 0 & 0 & 4000\\
       \end{array}
     \right),
$$
and
$$
  \mathbf{S}_{\mathbf{g}_{3}}^{*} = \frac{1}{(N-1)}
     \left(
       \begin{array}{cccc}
         s^{2}_{a_{3_{11}}} & 0 &  0  & 0\\
         0 & s^{2}_{a_{3_{22}}} &  0  & 0\\
         0 & 0 & s^{2}_{a_{3_{33}}}  & 0\\
         0 & 0 & 0  & s^{2}_{b_{3}}\\
       \end{array}
     \right) =
     \left(
       \begin{array}{cccc}
         15 & 0 & 0  & 0\\
         0 & 14 & 0  & 0\\
         0 & 0 & 9  & 0\\
         0 & 0 & 0  & 500\\
       \end{array}
     \right),
$$
with $N = 25$.

We ask for the number of components I, II y III to be manufactured per week in order to maximize the profit. The constraints have to be satisfied with a probability of at leas 0.99.

In this setting, the stochastic linear programming problem is stated as follows:
\begin{equation}\label{excaibs}
  \begin{array}{c}
    \build{Max}{}{\mathbf{x}} z(x)= \overline{c}_{1} x_{1} + \overline{c}_{2} x_{2} + \overline{c}_{3} x_{3} \\
    \mbox{Subject to:} \\
    \P(\overline{a}_{11} x_{1} + \overline{a}_{12} x_{2} + \overline{a}_{13} x_{3} \leq \overline{b}_{1}) \geq 0.99 \\
    \P(\overline{a}_{21} x_{1} + \overline{a}_{22} x_{2} + \overline{a}_{23} x_{3} \leq \overline{b}_{2}) \geq 0.99 \\
    \P(\overline{a}_{31} x_{1} + \overline{a}_{32} x_{2} + \overline{a}_{33} x_{3} \leq \overline{b}_{3}) \geq 0.99 \\
    x_{1}, x_{2}, x_{3} \geq 0. \\
  \end{array}
\end{equation}
Then, the equivalent deterministic nonlinear programming problem takes the form
\begin{equation}\label{excaibss}
  \begin{array}{c}
    \build{Max}{}{\mathbf{x}} Z(x)= k_{1}(50 x_{1} + 70 x_{2} + 70 x_{3}) - k_{2} \sqrt{(450 x_{1}^{2} + 2600 x_{2}^{2} + 850 x_{3}^{2})/12 }\\
    \mbox{Subject to:} \\
    12 x_{1} + 2 x_{2} + 4 x_{3} + 2.492159\sqrt{(30 x_{1}^{2} + 10 x_{2}^{2} + 12 x_{3}^{2}+5000)/25}\leq 1000 \\
    7 x_{1} + 5 x_{2} + 12 x_{3} + 2.492159\sqrt{(22 x_{1}^{2} + 32 x_{2}^{2} + 15 x_{3}^{2}+4000)/25}\leq 1500 \\
    2 x_{1} + 4 x_{2} + 3.5 x_{3} + 2.492159\sqrt{(15 x_{1}^{2} + 14 x_{2}^{2} + 9 x_{3}^{2}+500)/25}\leq 750 \\
    x_{1}, x_{2}, x_{3} \geq 0, \\
  \end{array}
\end{equation}
where $\tau = F^{-1}_{T}(0.99) = 2.492159$ is the percent-point of a random variable with a $t$-student distribution of $24$ freedom degrees.

Finally, Table \ref{tab6} shows the optimal solutions for the stochastic linear programming problem (\ref{excaibs}) with different values of $k_{1}$ and $k_{2}$ obtained with the soport of packages nloptr (\citet{j:08}) of free license software R (\citet{r:24}). 

\begin{table}[!ht] 
    \centering 
    \caption{Optimal solutions of the stochastic nonlinear programming problem (\ref{excs})} 
    \label{tab6}
    \begin{scriptsize}
    \begin{tabular}{c|ccc}
      \hline\hline
       \multirow{2}{*}{Variables}& \multicolumn{3}{c}{Optimal solutions} \\
      \cline{2-4}
       & $k_{1} = 0.25$ and $k_{2}=0.75$ & $k_{1} = 0.5$ and $k_{2}=0.5$ & $k_{1} = 0.75$ and $k_{2}=0.25$\\
      \hline
      $x_{1}$ & 38.98367 & 38.59630 & 38.59630 \\
      $x_{2}$ & 60.98913 & 81.75362 & 81.75362 \\
      $x_{3}$ & 57.95659 & 46.33119 & 46.33119 \\
      \hline
      $Z_{max}$ & 1781.9370 & 4804.4404 & 7850.0963 \\
      \hline
      $z_{max}$ & 10275.38  & 10895.75   & 10895.75 \\
      \hline\hline
    \end{tabular}
    \end{scriptsize}
\end{table}
\end{example}

\section*{Conclusions}

A generalisation of the choice-constraints Charnes-Cooper approach is presented. This new theory replaces the usual literature normality assumption by a robust elliptically contoured distribution. The technique also accepts a different elliptical distribution for each parameter in the stochastic linear programming problem. Moreover, instead of assuming known distribution parameters, the theorems estimate the parameters of the stochastic linear programming problem by available random samples of plausible different sizes. The new stochastic optimisation achieves a modification of the chance-constrained algorithm with an invariance under the choice of any assumed elliptically contoured distribution. This situation is more closely related to practical problems in different areas of knowledge.

Although the extreme simultaneous randomness setting was derived for $\mathbf{c}$, $\mathbf{a}_{i}, \ i =1,2,\dots,m.$ and $\mathbf{b}$ $\mathbf{a}_{i}$, $i = 1,2,\dots,m$ and $\mathbf{b}$, the remaining three pairwise combinations for simultaneous uncertainty can be achieved easily by muting the third deterministic variable in the main result of Case IV.  Thus, the equivalent deterministic (linear or nonlinear) programming problems can be solved under elliptical invariance for random $\mathbf{a}_{i}$, $i = 1,2,\dots,m$ and $\mathbf{b}$; $\mathbf{a}_{i}$, $i = 1,2,\dots,m$ and $\mathbf{c}$; and,  $\mathbf{b}$ and $\mathbf{c}$.

The invariance property enlarges the bounder for a demanding future research with similar advantage but under multi modal classes of distributions, real normed division algebras and/or shape artificial intelligence.



 \end{document}